\newcommand\CA{{\mathcal A}} 
\newcommand\CB{{\mathcal B}}
\newcommand\CC{{\mathcal C}}
\newcommand\CIF{{\mathcal {IF}}} 
\newcommand\HIF{{\mathcal {HIF}}}
\newcommand\oS{{\overline S}}
\newcommand\BBC{{\mathbb C}}
\newcommand\BBZ{{\mathbb Z}}
\newcommand {\GAP}{\textsf{GAP}}  
\newcommand {\CHEVIE}{\textsf{CHEVIE}}  
\newcommand {\Singular}{\textsf{SINGULAR}}  
\newcommand {\Sage}{\textsf{SAGE}}
\newcommand\Der{{\operatorname{Der}}}
\newcommand\GL{\operatorname{GL}}
\newcommand\pdeg{\operatorname{pdeg}}
\numberwithin{equation}{section}
\theoremstyle{plain}
\newtheorem{lemma}[equation]{Lemma}
\newtheorem{theorem}[equation]{Theorem}
\newtheorem{corollary}[equation]{Corollary}
\newtheorem{proposition}[equation]{Proposition}
\theoremstyle{definition}
\newtheorem{defn}[equation]{Definition}
\newtheorem{remark}[equation]{Remark}
\newtheorem{example}[equation]{Example}
\subjclass[2010]{Primary 20F55, 52B30, 52C35, 14N20; Secondary 13N15}
\begin{document}

\title[On inductively free reflection arrangements]
{on inductively free reflection arrangements}


\author[T. Hoge]{Torsten Hoge}
\address
{Fakult\"at f\"ur Mathematik,
Ruhr-Universit\"at Bochum,
D-44780 Bochum, Germany}
\email{torsten.hoge@rub.de}

\author[G. R\"ohrle]{Gerhard R\"ohrle}
\address
{Fakult\"at f\"ur Mathematik,
Ruhr-Universit\"at Bochum,
D-44780 Bochum, Germany}
\email{gerhard.roehrle@rub.de}

\keywords{Complex reflection groups,
reflection arrangements, free arrangements, 
inductively free arrangements, hereditarily inductively free arrangements}

\allowdisplaybreaks

\begin{abstract}
Suppose that $W$ is a finite, unitary 
reflection group acting on the complex 
vector space $V$.
Let $\CA = \CA(W)$ be the associated 
hyperplane arrangement of $W$.
Terao \cite{terao:freeI} has shown that 
each such reflection
arrangement $\CA$ is free.
There is the stronger notion of an inductively free arrangement. 
In 1992, Orlik and Terao \cite[Conj.\ 6.91]{orlikterao:arrangements}
conjectured that each reflection arrangement is inductively free.
It has been  known for quite some time that 
the braid arrangement 
as well as the 
Coxeter arrangements of type $B_\ell$
and type $D_\ell$
are inductively free. 
Barakat and Cuntz \cite{cuntz:indfree} 
completed this list only recently by showing 
that every Coxeter arrangement is inductively free.

Nevertheless, Orlik and Terao's conjecture is false in general.
In \cite{hogeroehrle:free}, we already gave
two counterexamples to this conjecture among the exceptional
complex reflection groups.

In this paper we
classify all inductively free reflection arrangements.
In addition, we show that the notions of inductive freeness
and that of hereditary inductive freeness
coincide for reflection arrangements.

As a consequence of our classification, we get an
easy, purely combinatorial characterization of 
inductively free 
reflection arrangements $\CA$ 
in terms of exponents of the restrictions 
to any hyperplane of $\CA$.
\end{abstract}

\maketitle


\section{Introduction}

Suppose that $W$ is a finite, unitary
reflection group acting on the complex 
vector space $V$.
Let $\CA = (\CA(W),V)$ be the associated 
hyperplane arrangement of $W$.
Terao \cite{terao:freeI} has shown that each reflection
arrangement $\CA$ is free and that 
the multiset of exponents 
$\exp \CA$ of $\CA$ is given by the 
coexponents 
of $W$;
see also \cite[\S 6]{orlikterao:arrangements}.

For $\CA$ an arrangement
let $L(\CA)$ be the intersection lattice of $\CA$.
For a subspace $X$ in $L(\CA)$ we have the restricted
arrangement $\CA^X$ in $X$ by means of restricting hyperplanes from $\CA$
to $X$.
In 1992, Orlik and Terao \cite[Conj.\ 6.90]{orlikterao:arrangements}
conjectured that each such restriction is again free
in case $\CA$ is a reflection arrangement.
Free arrangements with this property are called \emph{hereditarily free},
 \cite[Def.\ 4.140]{orlikterao:arrangements}. 
All but a few cases of this conjecture were settled in 
\cite{orlikterao:arrangements} and \cite{orlikterao:free};
recently, we resolved the outstanding cases 
in \cite{hogeroehrle:free},
confirming the conjecture.

There are various stronger notions of freeness,
in particular that of
an \emph{inductively free} arrangement 
due to Terao, \cite{terao:freeI};
see Definition \ref{def:indfree}.
If $\CA^X$ is inductively free for each $X \in L(\CA)$,
then $\CA$ is called   \emph{hereditarily inductively free},
cf.\ \cite[\S 6.4, p.~253]{orlikterao:arrangements}.
In 1992, Orlik and Terao \cite[Conj.\ 6.91]{orlikterao:arrangements}
conjectured that each reflection arrangement is hereditarily  inductively free.
Recently, Barakat and Cuntz \cite[Cor.\ 5.15]{cuntz:indfree} showed that  
all \emph{crystallographic} arrangement are indeed 
hereditarily inductively free. 
These include all  
Coxeter arrangements whose underlying Coxeter group is 
crystallographic, i.e.\ all Weyl groups.
The most challenging case here is that of 
the Coxeter group of type $E_8$.
If $\CA$ is hereditarily inductively free, then it is 
inductively free as well. 

While it has been known for quite some time that 
the braid arrangement $\CA(S_\ell)$ 
as well as the 
Coxeter arrangements of type $B_\ell$
and type $D_\ell$
are inductively free, 
see \cite[Ex.\ 4.55]{orlikterao:arrangements} and 
\cite[Ex.\ 2.6]{jamboterao:free}, 
it was only very recently that 
Barakat and Cuntz \cite{cuntz:indfree} 
completed this list
by showing that every Coxeter arrangement is inductively free
(including all Coxeter groups of exceptional type).
Nevertheless, Orlik and Terao's conjecture  \cite[Conj.\ 6.91]{orlikterao:arrangements} 
mentioned above is false in general;
in \cite{hogeroehrle:free}, we already gave two counterexamples,
namely the reflection arrangements of $G_{33}$ and $G_{34}$ 
are not inductively free.

So it is natural to determine the class of 
inductively free reflection arrangements and 
to characterize it, ideally in a combinatorial fashion.
These are the goals of this paper. 

Firstly, we classify all inductively free reflection arrangements,
see Theorem \ref{main}.
Secondly, we show that the notions of inductive freeness
and that of hereditarily inductive freeness
coincide for reflection arrangements,
see Theorem \ref{main2}.
This equivalence is rather surprising,
as the underlying classes of free arrangements are distinct as such, %
see Example \ref{ex:hindf-not-indf}.

Finally, 
as a consequence of our classification
in Theorem \ref{main}, we obtain an 
easy, purely combinatorial characterization of 
inductively free 
reflection arrangements $\CA(W)$ 
in terms of the exponents of the 
restriction of $\CA(W)$ to any hyperplane,
see Corollary \ref{cor:main}.

The relevance of free arrangements $\CA$ lies 
in the fact that they satisfy the so called factorization property 
of the Poincar\'e polynomial of its lattice $L(\CA)$,
cf.\ \cite[\S 2.3]{orlikterao:arrangements}.
If $\CA$ is free, then the Poincar\'e polynomial 
$\pi(\CA,t)$ of $L(\CA)$
factors into linear terms as follows:
\[
\pi(\CA,t) = \prod_{i=1}^\ell (1 + b_i t)
\]
where $\exp \CA = \{b_1, \ldots, b_\ell\}$ are the exponents of $\CA$,
\cite[Thm.\ 4.137]{orlikterao:arrangements}.
Since $\pi(\CA,t)$ is defined only in terms of the M\"obius function 
of $L(\CA)$, this factorization property suggests
that freeness of $\CA$ only depends on the 
lattice $L(\CA)$; indeed, this assertion 
is a fundamental conjecture due to Terao, 
\cite[Conj.\ 4.138]{orlikterao:arrangements}.

Here is our principal result
where we use the classification and 
labelling of the irreducible 
unitary reflection groups due to
Shephard and Todd, \cite{shephardtodd}. 

\begin{theorem}
\label{main}
For $W$ a finite complex reflection group,  
the reflection arrangement $\CA(W)$ of $W$ is 
inductively free if and only if 
$W$ does not admit an irreducible factor
isomorphic to a monomial group 
$G(r,r,\ell)$ for $r, \ell \ge 3$, 
$G_{24}, G_{27}, G_{29}, G_{31}, G_{33}$, or $G_{34}$.
\end{theorem}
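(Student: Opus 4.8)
The strategy is to go through the Shephard–Todd classification case by case, combining a reduction to irreducible $W$ with known structural results on inductive freeness. First I would record the easy reductions: inductive freeness of a product arrangement is equivalent to inductive freeness of each factor (this is a standard fact, e.g. \cite[Prop.\ 4.28, Cor.\ 4.29]{orlikterao:arrangements} together with the characterization of exponents of product arrangements), so it suffices to decide the question for each irreducible complex reflection group $W$. For irreducible $W$ the Shephard–Todd list splits into the infinite family $G(r,p,\ell)$ and the $34$ exceptional groups $G_4,\dots,G_{37}$; I would handle these two parts separately.

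For the infinite family I would first dispose of the Coxeter cases: $G(1,1,\ell+1)=S_{\ell+1}$ (the braid arrangement), $G(2,1,\ell)$ (type $B_\ell$) and $G(2,2,\ell)$ (type $D_\ell$) are inductively free by \cite[Ex.\ 4.55]{orlikterao:arrangements}, \cite[Ex.\ 2.6]{jamboterao:free}. For the remaining monomial groups I would treat $G(r,1,\ell)$ and $G(r,p,\ell)$ with $p\mid r$, $1<p<r$ by exhibiting an explicit inductive chain — adding the coordinate hyperplanes one block at a time and then the ``diagonal'' hyperplanes $x_i=\zeta x_j$ in a good order, checking at each stage that the exponent multiset of the restriction, which for reflection arrangements is forced by Terao's coexponent formula, interlaces correctly so that Addition–Deletion applies. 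The crucial negative part of this family is $G(r,r,\ell)$ for $r,\ell\ge 3$: here one must show the arrangement is \emph{not} inductively free. For this I would argue by an exponent obstruction — compute $\exp\CA(G(r,r,\ell))$ and the exponents of all its restrictions to hyperplanes, and show that no hyperplane $H$ can be the last one added in a hypothetical inductive chain, because Terao's Addition–Deletion theorem would force $\exp\CA^H$ to be a subset of $\exp\CA$ shifted appropriately, which fails numerically (this is exactly the kind of combinatorial obstruction later packaged in Corollary \ref{cor:main}); the small cases $G(3,3,3)$ etc.\ may require a direct finite check, e.g.\ with a computer algebra computation over the list of all hyperplanes.

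For the $34$ exceptional groups the plan is a finite computation. The rank-two groups $G_4,\dots,G_{22}$ are all inductively free — any rank-two arrangement is free, and one checks inductive freeness directly since restrictions to hyperplanes are points. For the higher-rank exceptionals $G_{23}=H_3$, $G_{24}$, $G_{25}$, $G_{26}$, $G_{27}$, $G_{28}=F_4$, $G_{29}$, $G_{30}=H_4$, $G_{31}$, $G_{32}$, $G_{33}$, $G_{34}$, $G_{35}=E_6$, $G_{36}=E_7$, $G_{37}=E_8$ I would invoke: the Weyl (crystallographic) cases $F_4,E_6,E_7,E_8$ are inductively free by Barakat–Cuntz \cite{cuntz:indfree}; $H_3$ and $H_4$ likewise by \cite{cuntz:indfree}; $G_{33}$ and $G_{34}$ are \emph{not} inductively free by \cite{hogeroehrle:free}. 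The remaining groups $G_{24},G_{25},G_{26},G_{27},G_{29},G_{31},G_{32}$ must each be decided individually. For the positive ones ($G_{25},G_{26},G_{32}$) I would produce an explicit inductively free filtration, again using that the exponents of every intermediate restriction are pinned down by Terao's theorem so the Addition–Deletion bookkeeping is a finite verification (realistically carried out with \GAP/\CHEVIE and \Singular). For the negative ones ($G_{24},G_{27},G_{29},G_{31}$) I would run the exponent obstruction: enumerate the $W$-orbits of hyperplanes, compute $\exp\CA^H$ for a representative of each orbit, and show that for \emph{every} hyperplane $H$ the triple $(\CA,\CA\setminus\{H\},\CA^H)$ fails the numerical condition of Addition–Deletion, so $\CA(W)$ cannot be built up inductively from any direction; since inductive freeness would in particular require \emph{some} hyperplane to be removable, this rules it out.

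The main obstacle is the negative direction, and specifically the family $G(r,r,\ell)$: unlike the finitely many exceptional groups, this is an infinite family, so the non-inductive-freeness cannot be checked by computer and needs a uniform argument. The heart of it is to understand $\exp\CA(G(r,r,\ell))$ and the exponents of the restriction $\CA^H$ for the various hyperplane orbits well enough to see the interlacing required by Addition–Deletion must fail for every choice of last hyperplane $H$; getting clean closed-form expressions for these restricted exponents (the restriction is again a reflection-type arrangement, essentially a $G(r,r,\ell-1)$ with some extra Boolean factors) and then pushing the numerical contradiction through for all $r,\ell\ge 3$ simultaneously is where the real work lies. I expect the base cases $r=3$ or $\ell=3$ may need separate attention, possibly a direct finite check, before an inductive argument on $\ell$ takes over.
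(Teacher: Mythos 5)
Your overall architecture matches the paper's: reduce to irreducible $W$ via the product compatibility of inductive freeness, use Barakat--Cuntz for all Coxeter cases, build explicit induction tables for $G(r,1,\ell)$ (noting that $\CA(G(r,p,\ell))=\CA(G(r,1,\ell))$ for $p\ne r$, which spares you a separate chain for intermediate $p$), and kill $G(r,r,\ell)$, $G_{24}$, $G_{27}$, $G_{29}$, $G_{33}$, $G_{34}$ by the exponent obstruction. That obstruction is also less delicate than you fear for the infinite family: the exponents of $\CA(G(r,r,\ell))$ and of its unique restriction type are known in closed form (\cite[Prop.\ 6.82, 6.85, Cor.\ 6.86]{orlikterao:arrangements}), and $\exp\CA^H\not\subseteq\exp\CA$ is a one-line numerical check valid uniformly for all $r,\ell\ge 3$; no induction on $\ell$ or special base cases are needed. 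One small precision: the statement you actually need is that freeness of $\CA$ and $\CA^H$ together with $\exp\CA^H\not\subseteq\exp\CA$ forces $\CA\setminus\{H\}$ to be \emph{non-free}; this follows from the surjectivity criterion for the Euler restriction map $D(\CA)\to D(\CA^H)$ (plus the prior fact that all restrictions of reflection arrangements are free), not directly from the two-out-of-three form of Addition--Deletion.

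The genuine gap is $G_{31}$. You place it with the groups to be handled by the exponent obstruction, but that obstruction fails there: $\exp\CA(G_{31})=\{1,13,17,29\}$ and every restriction has exponents $\{1,13,17\}\subseteq\exp\CA$, so the numerical condition of Addition--Deletion is satisfied for every hyperplane and no contradiction arises at the first deletion step. (This is precisely why Corollary \ref{cor:main} must exclude $G_{31}$: it is the one irreducible case where $\exp\CA^H\subseteq\exp\CA$ for all $H$ yet $\CA$ is not inductively free.) Ruling it out requires a much deeper analysis of any hypothetical induction table read backwards from $|\CA|=60$: one shows the first thirteen deletions are forced to keep restrictions with exponents $\{1,13,17\}$, then enumerates (by computer) all free subarrangements with $47$ hyperplanes, continues deleting down to $40$ hyperplanes, and finds that every admissible chain must pass through one of only two subarrangements $\CB$ with $\exp\CB=\{1,9,13,17\}$ whose restrictions all have $21$ hyperplanes --- a number not realizable as a triple sum from $\{1,9,13,17\}$, so the corresponding deletion is not free and the table cannot be completed. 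Without an argument of this kind your case analysis does not close. A secondary, smaller point: in the positive cases of rank $\ge 4$ (notably $G_{32}$ and the inductive step for $G(r,1,\ell)$), Definition \ref{def:indfree} requires each restriction $\CA''$ to be \emph{inductively} free, not merely free with the right exponents, so the ``bookkeeping'' must include identifying each restriction (e.g.\ as $\CA(G_{26})$ or $\CA_\ell(r)$) and verifying its inductive freeness separately.
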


As indicated above, the case for Coxeter groups 
was only recently established in \cite{cuntz:indfree}.

Our second main result shows that the notions of 
inductive freeness and that of hereditary inductive freeness coincide
for reflection arrangements.
This is 
rather surprising, as these two classes of free arrangements 
differ as such, cf.\ Example \ref{ex:hindf-not-indf}.

\begin{theorem}
\label{main2}
For $W$ a finite complex reflection group,  
let $\CA = \CA(W)$ be its reflection arrangement.
Then $\CA$ is inductively free
if and only if  $\CA$ is hereditarily inductively free.
\end{theorem}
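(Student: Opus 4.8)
The plan is to deduce Theorem \ref{main2} from the classification in Theorem \ref{main}. Since hereditary inductive freeness always implies inductive freeness, only one direction requires work: if $\CA(W)$ is inductively free, then it is hereditarily inductively free. By the product construction for arrangements (inductive freeness and hereditary inductive freeness are both compatible with products, and $\CA(W) = \CA(W_1) \times \cdots \times \CA(W_k)$ for the irreducible factors $W_i$ of $W$), I may assume $W$ is irreducible. Then Theorem \ref{main} tells me exactly which irreducible $W$ have $\CA(W)$ inductively free: everything except $G(r,r,\ell)$ for $r,\ell \ge 3$, and $G_{24}, G_{27}, G_{29}, G_{31}, G_{33}, G_{34}$. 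So it suffices to show that for each $W$ on the complementary list — the Coxeter groups, the well-generated exceptional groups not excluded, the groups $G(r,1,\ell)$, $G(r,r,2)$, $G(r,r,\ell)$ with $r \le 2$, etc. — the reflection arrangement $\CA(W)$ is in fact hereditarily inductively free.

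The key step is therefore to run through the inductively free reflection arrangements and verify that every restriction $\CA(W)^X$, $X \in L(\CA(W))$, is itself inductively free. For the Coxeter (crystallographic) cases this is already covered by Barakat and Cuntz \cite{cuntz:indfree}, who prove that all crystallographic arrangements are hereditarily inductively free; for the non-crystallographic Coxeter groups $H_3$, $H_4$, $I_2(m)$ one checks the (few) restrictions directly. For the imprimitive groups one uses the known description of the restrictions of $\CA(G(r,1,\ell))$ and $\CA(G(r,r,\ell))$ (with $r \le 2$ or $\ell = 2$) to elements of the intersection lattice: these restrictions are again, up to the addition or removal of hyperplanes in a controlled way, reflection arrangements of smaller monomial groups of the same admissible type, or Boolean-type arrangements, all of which are inductively free — and one must confirm that no restriction ever produces a copy of $\CA(G(r,r,\ell))$ with $r, \ell \ge 3$. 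For the remaining exceptional groups $G_{23}=H_3$, $G_{25}, G_{26}, G_{28}=F_4$, $G_{30}=H_4$, $G_{32}$, and the rank-two groups, one appeals to the explicit computation of all restricted arrangements (these computations underlie the resolution of the hereditary freeness conjecture in \cite{hogeroehrle:free}) and checks inductive freeness of each restriction, e.g.\ by exhibiting an inductive chain or invoking \GAP/\Singular-assisted verification as in \cite{cuntz:indfree}.

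A cleaner way to organize the argument, which I would prefer, is to establish and then use a closure statement: the class of reflection arrangements that are hereditarily inductively free is closed under taking restrictions within the reflection-arrangement world. Concretely, if $X \in L(\CA(W))$ then $\CA(W)^X$ is (localization-theoretically) again a reflection arrangement, or at least lies in a well-understood family, and one shows by induction on rank that it is inductively free precisely when it is hereditarily inductively free — so that a single inductively free restriction forces all further restrictions to be inductively free. Combined with Theorem \ref{main} applied to $\CA(W)$ itself, this immediately yields hereditary inductive freeness. The inductive step rests on the addition–deletion theorem for inductive freeness together with the structure of $L(\CA(W))$, and the base cases (rank $\le 2$) are trivial since every arrangement of rank at most two is inductively free.

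The main obstacle is the verification for the handful of exceptional groups: one cannot avoid checking, for each of $G_{25}, G_{26}, G_{32}$ and the borderline Coxeter types, that \emph{every} element of the intersection lattice gives an inductively free restriction, and some of these restricted arrangements are not themselves reflection arrangements, so one loses the comfort of Terao's freeness theorem and must produce explicit inductive chains (or rule out inductive freeness — but here it must always succeed, by the expected truth of the theorem). This is exactly the kind of case-by-case, partly computer-assisted bookkeeping that dominated \cite{hogeroehrle:free} and \cite{cuntz:indfree}, and the proof of Theorem \ref{main2} will lean on those computations rather than repeat them. Everything else — reduction to the irreducible case, the product compatibility, the rank $\le 2$ base cases — is routine.
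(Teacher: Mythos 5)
Your overall skeleton matches the paper's: reduce to the irreducible case via the product compatibility of $\HIF$ (Corollary \ref{cor:product-heredindfree}), invoke Theorem \ref{main}, cite Barakat--Cuntz for the crystallographic Coxeter arrangements, and use the structure of restrictions of $\CA(G(r,1,\ell))$ for the monomial case. Where you diverge is in the exceptional (and non-crystallographic Coxeter) cases, and this is exactly where you predict heavy, partly computer-assisted verification of every restriction of $G_{25}$, $G_{26}$, $G_{32}$, $H_3$, $H_4$. The paper avoids all of that with two cheap observations: Lemma \ref{lem:3-arr} (for any $3$-arrangement, inductive freeness already implies hereditary inductive freeness, since every proper restriction has rank $\le 2$ and all arrangements of rank $\le 2$ are in $\CIF$) and Lemma \ref{lem:4-arr} (for a rank-$4$ reflection arrangement on whose hyperplanes $W$ acts transitively, the same equivalence holds, because the definition of inductive freeness already hands you one inductively free restriction $\CA^{H_0}$, transitivity propagates this to all hyperplanes, and all deeper restrictions have rank $\le 2$). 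Since every inductively free irreducible non-real exceptional arrangement has rank $\le 4$ (the rank $\ge 5$ ones, $G_{33}$ and $G_{34}$, are excluded by Theorem \ref{main}), and $G_{32}$ is transitive on its hyperplanes, these two lemmas dispose of all the cases you flag as the ``main obstacle'' with no additional computation beyond what Theorem \ref{main} already required. Your alternative ``closure'' argument runs into the difficulty you yourself note --- restrictions of reflection arrangements need not be reflection arrangements, so Terao's theorem and the classification are unavailable one level down --- and the paper's low-rank lemmas are precisely the device that makes this irrelevant: one never needs to understand restrictions below rank $3$, where everything is automatic. So your proof would go through if the promised verifications were carried out, but as written it defers the hardest step to unspecified computations that the paper shows are unnecessary.
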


Strikingly,
as a consequence of our classification, we obtain an  
easy, purely combinatorial characterization of 
inductively free 
reflection arrangements in terms of exponents.

\begin{corollary}
\label{cor:main}
For $W$ a finite 
complex reflection group,  
let $\CA = \CA(W)$ be the reflection arrangement of $W$.
Suppose that $W$ does not admit an irreducible factor
isomorphic to $G_{31}$.
Then $\CA$ is inductively free if and only if 
$\exp \CA^H \subseteq \exp \CA$ for any $H \in \CA$.%
\end{corollary}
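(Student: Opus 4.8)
\section*{Proof proposal for Corollary~\ref{cor:main}}

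The plan is to deduce the corollary from the classification in Theorem~\ref{main}, the point being that the combinatorial condition ``$\exp \CA^H \subseteq \exp \CA$ for all $H$'' cuts out exactly the same class of reflection arrangements as inductive freeness, with the single exception of $G_{31}$ --- which is precisely why that group is excluded. First I would reduce to the case that $W$ is irreducible. If $W = W_1 \times W_2$ acting on $V_1 \oplus V_2$, then $\CA(W) = \CA(W_1) \times \CA(W_2)$, every $H \in \CA(W)$ has the form $H_1 \oplus V_2$ or $V_1 \oplus H_2$, and $\CA(W)^{H_1 \oplus V_2} = \CA(W_1)^{H_1} \times \CA(W_2)$. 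Since the exponents of a product are the multiset union of the exponents of its factors, since $\CA(W)$ is inductively free if and only if each $\CA(W_i)$ is, and since the hypothesis on $G_{31}$ is a condition on irreducible factors, both sides of the asserted equivalence and the standing hypothesis are equivalent to the conjunctions of the corresponding statements for the $W_i$. So it suffices to prove the corollary for irreducible $W$. Note that $\CA^H$ is free for every $H \in \CA$, since every reflection arrangement is hereditarily free by \cite{hogeroehrle:free}, so $\exp \CA^H$ is always defined.

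For the forward implication, which in fact needs no hypothesis on $G_{31}$: if $\CA = \CA(W)$ is inductively free and $W$ is irreducible, then by Theorem~\ref{main} $W$ is neither $G(r,r,\ell)$ with $r, \ell \ge 3$ nor one of $G_{24}, G_{27}, G_{29}, G_{31}, G_{33}, G_{34}$. One then checks $\exp \CA^H \subseteq \exp \CA$ for every $H$ by going through the list of the remaining irreducible $W$: for the exceptional types this is a finite comparison of the coexponents of $W$ with the exponents of the rank-$(\ell-1)$ restrictions $\CA^H$, using the same data assembled in the proof of Theorem~\ref{main}; and for the infinite families among these --- the full monomial groups $G(r,1,\ell)$, the Coxeter types $A_\ell$, $B_\ell$, $D_\ell$, the dihedral groups and the rank-one groups --- each restriction $\CA^H$ is, up to a product with finitely many coordinate hyperplanes, again a reflection arrangement of the same family in smaller rank, so $\exp \CA^H$ can be computed uniformly and compared with $\exp \CA$.

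For the reverse implication I would argue by contraposition. Let $W$ be irreducible with $W \not\cong G_{31}$ and suppose $\CA = \CA(W)$ is not inductively free. By Theorem~\ref{main}, $W = G(r,r,\ell)$ for some $r, \ell \ge 3$ or $W \in \{G_{24}, G_{27}, G_{29}, G_{33}, G_{34}\}$, and for each I must produce a single $H \in \CA$ with $\exp \CA^H \not\subseteq \exp \CA$. For the five exceptional groups this is a finite, explicit check. For $W = G(r,r,\ell)$ I would take $H = \ker(x_1 - x_2) \in \CA$; then $\CA^H$ is $\CA(G(r,r,\ell-1))$ together with a single coordinate hyperplane, and a straightforward addition argument shows it is free with $\exp \CA^H = \{1, r+1, 2r+1, \dots, (\ell-3)r+1, (\ell-2)r - \ell + 3\}$, whereas $\exp \CA$ is the multiset $\{1, r+1, 2r+1, \dots, (\ell-2)r+1, (\ell-1)r - \ell + 1\}$ of coexponents of $G(r,r,\ell)$. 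A short case analysis then shows the multiset inclusion fails for all $r, \ell \ge 3$: for $\ell = 3$ because $\CA^H$ has rank $2$ with sole large exponent $r \notin \exp \CA$, and for $\ell \ge 4$ because $(\ell-2)r - \ell + 3$ is either absent from $\exp \CA$ or else equals an exponent already occurring in $\exp \CA^H$ with multiplicity exceeding its multiplicity in $\exp \CA$. It remains to see that $G_{31}$ genuinely must be excluded: from the coexponents of $G_{31}$ and the exponents of its hyperplane restrictions one checks directly that $\CA(G_{31})$ satisfies $\exp \CA^H \subseteq \exp \CA$ for every $H$, although $\CA(G_{31})$ is not inductively free by Theorem~\ref{main}.

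The step I expect to be the main obstacle is the treatment of the monomial family $G(r,r,\ell)$ in the reverse direction: one needs a single hyperplane that works uniformly for all $r, \ell \ge 3$, together with a sufficiently precise description of $\exp \CA^H$, and the comparison with $\exp \CA$ is somewhat delicate since in certain cases the two exponent sets coincide and the failure of inclusion is visible only through multiplicities. The exceptional-group verifications are routine once the restriction data used for Theorem~\ref{main} is at hand.
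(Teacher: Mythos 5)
Your proposal is correct and takes essentially the same route as the paper: Corollary \ref{cor:main} is deduced from Theorem \ref{main} together with a case-by-case comparison of $\exp \CA^H$ with $\exp \CA$ from the tables in \cite[App.~C]{orlikterao:arrangements}, your $G(r,r,\ell)$ computation being exactly the one in Proposition \ref{prop:grrl} and your observation about $G_{31}$ being precisely the reason for its exclusion. The only (harmless) difference is that the paper obtains the forward implication in one stroke from Corollary \ref{cor:nonindfree} instead of re-verifying the containment for each inductively free type.
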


Corollary \ref{cor:main} follows immediately from 
Theorem \ref{main} and the explicit lists of 
exponents $\exp A^H$ of the restrictions $\CA^H$ 
of $\CA$ to any hyperplane $H$ in $\CA$ 
from \cite[\S 6, App.\ C]{orlikterao:arrangements}.
Note that by \cite{hogeroehrle:free},  
$\CA^H$ is known to be free also for $G_{33}$ and $G_{34}$
with exponents given as in \cite[Tables C.14, C.17]{orlikterao:arrangements}.

The paper is organized as follows.
In the next section we recall the required notation and  
facts about freeness of hyperplane arrangements,
inductively free arrangements and 
reflection arrangements, mostly taken from
\cite[\S 4, \S6]{orlikterao:arrangements}.

In Proposition \ref{prop:product-indfree}
we show that inductively free arrangements 
are compatible with the product 
construction for arrangements,
and extend this to 
hereditarily inductively free arrangements 
in Corollary \ref{cor:product-heredindfree}.
In Lemma \ref{lem:3-arr}, we observe that 
a $3$-arrangement is 
inductively free if and only if
it is hereditarily inductively free.

Our key criterion, 
Corollary \ref{cor:nonindfree}, 
shows that the reflection arrangement 
$\CA = \CA(W)$ is not inductively free
provided 
$\exp \CA^H \not\subseteq \exp \CA$, 
for any restriction $\CA^H$ of $\CA$.

Theorems \ref{main} and  \ref{main2}
are proved in Section \ref{sect:proof}.
In Propositions \ref{prop:gr1l} and \ref{prop:grrl}, we show that the 
arrangements of the
monomial groups $G(r,p,\ell)$ for $p \ne r$ and $\ell \ge 2$ 
are always inductively free while those of the  
monomial groups  $G(r,r,\ell)$ for $r, \ell \ge 3$ 
are not.

For general information about arrangements and reflection groups we refer
the reader to \cite{bourbaki:groupes} and \cite{orlikterao:arrangements}.

\section{Recollections and Preliminaries}

\subsection{Hyperplane Arrangements}
\label{ssect:hyper}

Let $V = \BBC^\ell$ 
be an $\ell$-dimensional complex vector space.
A \emph{hyperplane arrangement} is a pair
$(\CA, V)$, where $\CA$ is a finite collection of hyperplanes in $V$.
Usually, we simply write $\CA$ in place of $(\CA, V)$.
We only consider central arrangements.
We write $n = |\CA|$
for the number of hyperplanes in $\CA$.
The empty arrangement in $V$ is denoted by $\Phi_\ell$.

The \emph{lattice} $L(\CA)$ of $\CA$ is the set of subspaces of $V$ of
the form $H_1\cap \dotsm \cap H_r$ where $\{ H_1, \ldots, H_r \}$ is a subset
of $\CA$. 
For $X \in L(\CA)$, we have two associated arrangements, 
firstly the subarrangement 
$\CA_X :=\{H \in \CA \mid X \subseteq H\} \subseteq \CA$
of $\CA$ and secondly, 
the \emph{restriction of $\CA$ to $X$}, $(\CA^X,X)$, where 
$\CA^X := \{ X \cap H \mid H \in \CA \setminus \CA_X\}$.
Note that $V$ belongs to $L(\CA)$
as the intersection of the empty 
collection of hyperplanes and $\CA^V = \CA$. 

For $\CA \ne \Phi_\ell$, 
let $H_0 \in \CA$.
Define $\CA' := \CA \setminus\{ H_0\}$,
and $\CA'' := \CA^{H_0} = \{ H_0 \cap H \mid H \in \CA'\}$.
Then $(\CA, \CA', \CA'')$ is a \emph{triple} of arrangements,
\cite[Def.\ 1.14]{orlikterao:arrangements}.

The \emph{product}
$\CA = (\CA_1 \times \CA_2, V_1 \oplus V_2)$ 
of two arrangements $(\CA_1, V_1), (\CA_2, V_2)$
is defined by
\begin{equation}
\label{eq:product}
\CA := \CA_1 \times \CA_2 = \{H_1 \oplus V_2 \mid H_1 \in \CA_1\} \cup 
\{V_1 \oplus H_2 \mid H_2 \in \CA_2\},
\end{equation}
see \cite[Def.\ 2.13]{orlikterao:arrangements}.
In particular,  
$|\CA| = |\CA_1| + |\CA_2|$. 

Note that 
$\CA \times \Phi_0 = \CA$
for any arrangement $\CA$. 
If $\CA$ is of the form $\CA = \CA_1 \times \CA_2$, where 
$\CA_i \ne \Phi_0$ for $i=1,2$, then $\CA$
is called \emph{reducible}, else $\CA$
is 
\emph{irreducible}, 
\cite[Def.\ 2.15]{orlikterao:arrangements}.

For instance, the braid arrangement $\CA(S_\ell)$ is 
the product of the empty $1$-arrangement and an irreducible arrangement, 
\cite[Ex.\ 2.16]{orlikterao:arrangements}.

Let $\CA = \CA_1 \times \CA_2$ be a product. 
By  \cite[Prop.\ 2.14]{orlikterao:arrangements},
there is a lattice isomorphism
\[
 L(\CA_1) \times L(\CA_2) \cong L(\CA) \quad \text{by} \quad
(X_1, X_2) \mapsto X_1 \oplus X_2.
\]
Using \eqref{eq:product}, it is easily seen that
for $X =  X_1 \oplus X_2 \in L(\CA)$, we have 
$\CA _X =  ({\CA_1})_{X_1} \times ({\CA_2})_{X_2}$
and 
\begin{equation}
\label{eq:restrproduct}
\CA^X = \CA_1^{X_1} \times \CA_2^{X_2}.
\end{equation}

\subsection{Free Arrangements}
\label{ssect:free}

Let $S = S(V^*)$ be the symmetric algebra of the dual space $V^*$ of $V$.
If $x_1, \ldots , x_\ell$ is a basis of $V^*$, then we identify $S$ with 
the polynomial ring $\BBC[x_1, \ldots , x_\ell]$.
Letting $S_p$ denote the $\BBC$-subspace of $S$
consisting of the homogeneous polynomials of degree $p$ (along with $0$),
we see that
$S$ is naturally $\BBZ$-graded: $S = \oplus_{p \in \BBZ}S_p$, where
$S_p = 0$ for $p < 0$.

Let $\Der(S)$ be the $S$-module of $\BBC$-derivations of $S$.
For $i = 1, \ldots, \ell$, 
let $D_i := \partial/\partial x_i$ be the usual derivation of $S$.
Then $D_1, \ldots, D_\ell$ is an $S$-basis of $\Der(S)$.
We say that $\theta \in \Der(S)$ is 
\emph{homogeneous of polynomial degree p}
provided 
$\theta = \sum_{i=1}^\ell f_i D_i$, 
where $f_i \in S_p$ for each $1 \le i \le \ell$.
In this case we write $\pdeg \theta = p$.
Let $\Der(S)_p$ be the $\BBC$-subspace of $\Der(S)$ consisting 
of all homogeneous derivations of polynomial degree $p$.
Then $\Der(S)$ is a graded $S$-module:
$\Der(S) = \oplus_{p\in \BBZ} \Der(S)_p$.

Following \cite[Def.~4.4]{orlikterao:arrangements}, 
for $f \in S$, we define the $S$-submodule $D(f)$ of $\Der(S)$ by
\[
D(f) := \{\theta \in \Der(S) \mid \theta(f) \in f S\} .
\]

Let $\CA$ be an arrangement in $V$. 
Then for $H \in \CA$ we fix $\alpha_H \in V^*$ with
$H = \ker \alpha_H$.
The \emph{defining polynomial} $Q(\CA)$ of $\CA$ is given by 
$Q(\CA) := \prod_{H \in \CA} \alpha_H \in S$.

The \emph{module of $\CA$-derivations} of $\CA$ is 
defined by 
\[
D(\CA) := D(Q(\CA)).
\]
We say that $\CA$ is \emph{free} if the module of $\CA$-derivations
$D(\CA)$ is a free $S$-module.
The notion of freeness was introduced by Saito in his 
seminal work \cite{saito}.

With the $\BBZ$-grading of $\Der(S)$, the module of $\CA$-derivations
becomes a graded $S$-module $D(\CA) = \oplus_{p\in \BBZ} D(\CA)_p$,
where $D(\CA)_p = D(\CA) \cap \Der(S)_p$, 
\cite[Prop.\ 4.10]{orlikterao:arrangements}.
If $\CA$ is a free arrangement, then the $S$-module 
$D(\CA)$ admits a basis of $\ell$ homogeneous derivations, 
say $\theta_1, \ldots, \theta_\ell$, \cite[Prop.\ 4.18]{orlikterao:arrangements}.
While the $\theta_i$'s are not unique, their polynomial 
degrees $\pdeg \theta_i$ 
are unique (up to ordering). This multiset is the set of 
\emph{exponents} of the free arrangement $\CA$
and is denoted by $\exp \CA$.

The so called \emph{Addition-Deletion Theorem} 
due to Terao  \cite{terao:freeI} plays a 
crucial role in the study of free arrangements, 
\cite[Thm.\ 4.51]{orlikterao:arrangements}.

\begin{theorem}
\label{thm:add-del}
Suppose that $\CA \ne \Phi_\ell$.
Let  $(\CA, \CA', \CA'')$ be a triple of arrangements. Then any 
two of the following statements imply the third:
\begin{itemize}
\item $\CA$ is free with $\exp \CA = \{ b_1, \ldots , b_{\ell -1}, b_\ell\}$;
\item $\CA'$ is free with $\exp \CA' = \{ b_1, \ldots , b_{\ell -1}, b_\ell-1\}$;
\item $\CA''$ is free with $\exp \CA'' = \{ b_1, \ldots , b_{\ell -1}\}$.
\end{itemize}
\end{theorem}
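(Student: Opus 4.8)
The plan is to derive all three implications from a single exact sequence of graded $S$-modules attached to the triple, using Saito's classical determinantal criterion (homogeneous $\theta_1,\dots,\theta_\ell\in D(\CB)$ form a basis of $D(\CB)$ exactly when $\det\big(\theta_i(x_j)\big)$ is a nonzero scalar multiple of $Q(\CB)$) to produce and to certify free bases. Fix $H_0\in\CA$, write $\CA'=\CA\setminus\{H_0\}$, $\CA''=\CA^{H_0}$ and $\alpha_0:=\alpha_{H_0}$, and choose coordinates $x_1,\dots,x_\ell$ on $V$ with $\alpha_0=x_1$; then $S'':=S/x_1S$ is the coordinate ring of $H_0$ and $\Der(S'')=\bigoplus_{i\ge2}S''D_i$. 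Since $\gcd(\alpha_0,Q(\CA'))=1$, one has $D(\CA)=D(\CA')\cap D(\alpha_0)$, so in particular $D(\CA)\subseteq D(\CA')$; and the Euler derivation $\theta_E=\sum_ix_iD_i$ lies in $D(\CB)$ for every $\CB$ and restricts to the Euler derivation of $H_0$.

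Reducing coefficients modulo $x_1$ gives the key device: for $\theta=\sum_{i=1}^\ell f_iD_i\in D(\CA)$ (where $x_1\mid f_1$ because $\theta\in D(\alpha_0)$) put $\res(\theta):=\sum_{i\ge2}\bar f_iD_i\in\Der(S'')$, the $D_1$-coefficient having vanished. A short computation with the conditions $\theta(\alpha_H)\in\alpha_HS$, $H\in\CA'$, shows $\res(\theta)\in D(\CA'')$; moreover $\ker\res=\alpha_0D(\CA')$ and $\alpha_0D(\CA')\subseteq D(\CA)$, so one obtains a graded exact sequence
\[
0\longrightarrow D(\CA')\xrightarrow{\ \cdot\,\alpha_0\ }D(\CA)\xrightarrow{\ \res\ }D(\CA''),
\]
with $\cdot\,\alpha_0$ raising polynomial degree by one and $\res$ degree-preserving; for any set of $S$-module generators $\theta_i$ of $D(\CA)$ one has $\operatorname{im}\res=\sum_iS''\res(\theta_i)$, but $\res$ need not be surjective a priori.

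Consider the template for, say, $(\CA\text{ free})\wedge(\CA''\text{ free})\Rightarrow(\CA'\text{ free})$; the other two implications run along the same sequence, reading off a basis of $D(\CA)$, resp.\ of $D(\CA'')$, instead. Take a homogeneous basis $\theta_1,\dots,\theta_\ell$ of $D(\CA)$ with $\pdeg\theta_i=b_i$; by Saito's criterion $\det\big(\theta_i(x_j)\big)\doteq Q(\CA)=\alpha_0Q(\CA')$, and since its first column is divisible by $\alpha_0$, reducing mod $\alpha_0$ shows $\operatorname{im}\res$ has full rank $\ell-1$ over $S''$, so $\operatorname{coker}\res$ is a torsion $S''$-module. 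The crucial step is to prove $\operatorname{coker}\res=0$, i.e.\ that $\res$ is surjective; this is exactly where the numerical hypothesis is consumed, since with $D(\CA)$ and $D(\CA'')$ free and $\exp\CA''$ obtained from $\exp\CA$ by deleting one value, a comparison of Hilbert series along the exact sequence constrains $\operatorname{Hilb}D(\CA')$ and rules out a nonzero cokernel. Granting surjectivity, the rest is linear algebra: the $\res(\theta_i)$ generate the free rank-$(\ell-1)$ module $D(\CA'')$, so a graded Nakayama argument makes one of them redundant, $\res(\theta_k)=\sum_{j\ne k}\bar g_j\res(\theta_j)$ with homogeneous $\bar g_j\in S''$; lifting the $\bar g_j$ and using $\ker\res=\alpha_0D(\CA')$, the element $\theta_k-\sum_{j\ne k}g_j\theta_j\in D(\CA)$ equals $\alpha_0\theta_k^{\ast}$ for a unique $\theta_k^{\ast}\in D(\CA')$ of polynomial degree $b_k-1$; and $\{\theta_j:j\ne k\}\cup\{\theta_k^{\ast}\}$ is a family of $\ell$ homogeneous derivations in $D(\CA')$ whose coefficient determinant is $\tfrac1{\alpha_0}\det\big(\theta_i(x_j)\big)\doteq Q(\CA')$, so Saito's criterion gives $\CA'$ free with $\exp\CA'=(\exp\CA\setminus\{b_k\})\cup\{b_k-1\}$, which by the hypothesis relating $\exp\CA$ and $\exp\CA''$ equals $\{b_1,\dots,b_{\ell-1},b_\ell-1\}$ regardless of the index $k$.

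The main obstacle is precisely the surjectivity of $\res$: the inclusion $\operatorname{im}\res\subseteq D(\CA'')$ is trivial, and forcing the cokernel to vanish is where one genuinely exploits that two of the arrangements are free with compatible exponents — a crude Hilbert-series subtraction yields only one of the two inequalities one needs, so the cokernel must be controlled more carefully, e.g.\ by induction on $|\CA|$ or by a direct analysis of its torsion. Everything else — well-definedness of $\res$ into $D(\CA'')$, exactness of the sequence, the graded Nakayama redundancy step, the bookkeeping of the degree shift and of the factor $\alpha_0$ in Saito's determinant, and the reduction of an arbitrary arrangement to the essential case via the product decomposition (under which $\exp$, freeness, and the triple all behave multiplicatively) — is routine.
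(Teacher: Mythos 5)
The paper does not prove this statement: it is Terao's Addition--Deletion Theorem, quoted from \cite[Thm.\ 4.51]{orlikterao:arrangements}, so your attempt can only be compared with the standard proof there. Your framework is the right one --- the exact sequence $0 \to D(\CA') \xrightarrow{\ \cdot\alpha_0\ } D(\CA) \xrightarrow{\ q\ } D(\CA'')$ and Saito's criterion are exactly the tools used, and your endgame (graded Nakayama to discard one generator, then dividing one row of the Saito determinant by $\alpha_0$) is sound \emph{granted} that $q$ is surjective. But that surjectivity is the entire content of the theorem, and you concede in your last paragraph that you cannot establish it: the Hilbert-series subtraction along the sequence only bounds $\operatorname{Hilb} D(\CA')$ from one side (in the direction ``$\CA$, $\CA''$ free $\Rightarrow$ $\CA'$ free'' you do not know $\operatorname{Hilb} D(\CA')$ a priori), and matching Hilbert series would not yield freeness in any case. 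This gap is not routine: without the exponent hypothesis the implication is simply false --- the failure of $q$ to be onto when $\exp \CA'' \not\subseteq \exp \CA$ is precisely Corollary \ref{cor:q} of this paper and the engine of its non-inductive-freeness arguments --- so the numerical hypothesis must be consumed in an essential, non-formal way. In the standard proof it enters through an ingredient absent from your sketch: the auxiliary polynomial $B$ with $\deg B = |\CA'| - |\CA''|$ and $\overline{Q(\CA')} = \overline{B}\, Q(\CA'')$ in $S/\alpha_0 S$, together with the lemma that $\theta(\alpha_0) \in \alpha_0 S + B S$ for every $\theta \in D(\CA')$ (see \cite[\S 4.3]{orlikterao:arrangements}); this is what converts the relation between the exponent multisets into the existence of the missing derivations.

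A second, smaller defect: the claim that the other two implications ``run along the same sequence, reading off a basis of $D(\CA)$, resp.\ of $D(\CA'')$'' is too quick. The addition direction ($\CA'$, $\CA''$ free $\Rightarrow$ $\CA$ free) --- the one this paper actually uses to build its induction tables --- cannot be read off from the sequence, since $D(\CA)$ sits in the middle and the sequence is not known to be right-exact; one must construct a basis of $D(\CA)$ directly by correcting a homogeneous basis of $D(\CA')$ so that the corrected derivations stabilize $\alpha_0 S$, and this correction again goes through the $B$-lemma and the containment $\exp \CA'' \subseteq \exp \CA'$, not through Nakayama applied to the image of $q$. So while your architecture matches the classical one, the two steps where the hypotheses do real work are both missing.
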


Owing to \cite[Prop.\ 4.28]{orlikterao:arrangements}, 
free arrangements behave well with respect to 
the  product construction for arrangements.

\begin{proposition}
\label{prop:product-free}
Let $\CA_1, \CA_2$ be two arrangements.
Then  $\CA = \CA_1 \times \CA_2$ is free
if and only if both 
$\CA_1$ and $\CA_2$ are free and in that case
the multiset of exponents of $\CA$ is given by 
$\exp \CA = \{\exp \CA_1, \exp \CA_2\}$.
\end{proposition}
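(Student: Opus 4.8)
The plan is to prove this by a direct module-theoretic computation, exhibiting an explicit $S$-module isomorphism between $D(\CA)$ and a tensor-type construction built from $D(\CA_1)$ and $D(\CA_2)$. First I would set up notation: write $V = V_1 \oplus V_2$, choose coordinates $x_1, \ldots, x_k$ dual to a basis of $V_1$ and $y_1, \ldots, y_m$ dual to a basis of $V_2$, so that $S = S(V^*) = \BBC[x_1, \ldots, x_k, y_1, \ldots, y_m]$, and let $S_1 = \BBC[x_1, \ldots, x_k]$, $S_2 = \BBC[y_1, \ldots, y_m]$. By the definition \eqref{eq:product} of the product, each hyperplane of $\CA_1$ contributes a linear form in the $x$-variables only, and likewise for $\CA_2$ in the $y$-variables, so the defining polynomial factors as $Q(\CA) = Q(\CA_1) Q(\CA_2)$ with $Q(\CA_1) \in S_1$ and $Q(\CA_2) \in S_2$. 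The key observation is then that $D(\CA) = D(Q(\CA_1)) \cap D(Q(\CA_2))$ inside $\Der(S)$, because a derivation $\theta$ satisfies $\theta(Q_1 Q_2) \in Q_1 Q_2 S$ if and only if it satisfies both $\theta(Q_1) \in Q_1 S$ and $\theta(Q_2) \in Q_2 S$ — one direction is the Leibniz rule, the other uses that $Q_1$ and $Q_2$ are coprime in the UFD $S$.

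Next I would decompose an arbitrary derivation. Writing $\theta = \sum_i f_i \partial_{x_i} + \sum_j g_j \partial_{y_j}$ with $f_i, g_j \in S$, the condition $\theta(Q_1) \in Q_1 S$ involves only $\sum_i f_i \partial_{x_i}(Q_1)$, i.e.\ only the $x$-part of $\theta$, since $Q_1$ does not involve the $y$-variables. Thus $D(\CA)$ splits as a direct sum of the submodule of derivations $\sum_i f_i \partial_{x_i}$ with $\sum_i f_i \partial_{x_i}(Q_1) \in Q_1 S$, and the analogous submodule in the $y$-direction. I claim the first summand is naturally $S \otimes_{S_1} D(\CA_1)$, via $s \otimes \theta_1 \mapsto s \theta_1$ (where a derivation of $S_1$ is extended to a derivation of $S$ annihilating the $y$-variables): the point is that $f_i \in S = S_1 \otimes_\BBC S_2$ and the membership condition $\sum f_i \partial_{x_i}(Q_1) \in Q_1 S$ is, coefficient-by-coefficient in a $\BBC$-basis of $S_2$, exactly the condition defining $D(\CA_1) \subseteq \Der(S_1)$. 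Consequently $D(\CA) \cong \bigl(S \otimes_{S_1} D(\CA_1)\bigr) \oplus \bigl(S \otimes_{S_2} D(\CA_2)\bigr)$ as graded $S$-modules.

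From this isomorphism both directions follow. If $\CA_1$ and $\CA_2$ are free with homogeneous bases $\theta_1, \ldots, \theta_k$ and $\eta_1, \ldots, \eta_m$, then their images together form a homogeneous $S$-basis of $D(\CA)$ of size $k + m = \ell$, so $\CA$ is free and $\exp \CA = \{\pdeg \theta_1, \ldots, \pdeg \theta_k, \pdeg \eta_1, \ldots, \pdeg \eta_m\} = \{\exp \CA_1, \exp \CA_2\}$. Conversely, if $\CA$ is free, then $D(\CA)$ is a free $S$-module of rank $\ell$; since $S$ is faithfully flat over $S_1$ and $S \otimes_{S_1} D(\CA_1)$ is a direct summand of a free module, a descent argument (or: base-change back to $S_1$ via the section $S \to S_1$ killing the $y$-variables) shows $D(\CA_1)$ is a projective, hence by the graded Quillen–Suslin / graded Nakayama argument a free, $S_1$-module of the appropriate rank, and similarly for $\CA_2$. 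The main obstacle is making the ``descent'' in the converse direction rigorous in the graded setting; rather than belabour it, I would simply cite \cite[Prop.\ 4.28]{orlikterao:arrangements}, as the statement is already recorded there, and use the explicit isomorphism above only to pin down the exponents.
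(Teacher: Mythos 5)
Your proposal is correct, but note that the paper itself offers no proof of this proposition: it is quoted directly from Orlik--Terao \cite[Prop.\ 4.28]{orlikterao:arrangements}, and your argument is essentially the one given there --- factor $Q(\CA)=Q(\CA_1)Q(\CA_2)$, use coprimality to get $D(\CA)=D(Q(\CA_1))\cap D(Q(\CA_2))$, and split $D(\CA)\cong\bigl(S\otimes_{S_1}D(\CA_1)\bigr)\oplus\bigl(S\otimes_{S_2}D(\CA_2)\bigr)$. The one step you flag as delicate, descending freeness from $S$ to $S_1$ in the converse direction, goes through exactly as you indicate: the summand $S\otimes_{S_1}D(\CA_1)$ is a direct summand of a free graded $S$-module, hence graded projective; base change along the graded retraction $S\to S_1$ (killing the $y$-variables) recovers $D(\CA_1)$ as a finitely generated graded projective $S_1$-module, which is free by graded Nakayama, of rank $\dim V_1$ since $D(\CA_1)$ contains $Q(\CA_1)\Der(S_1)$. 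So the fallback citation is not logically necessary, though citing \cite[Prop.\ 4.28]{orlikterao:arrangements} is precisely what the paper does.
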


Suppose that  $\CA \ne \Phi_\ell$. 
Fix $H_0 \in \CA$ and consider
the triple $(\CA,\CA', \CA'')$  
associated to $H_0$. Furthermore, let 
$\alpha_0 \in V^*$ so that $H_0 = \ker \alpha_0$.
Following \cite[Def.\ 4.43]{orlikterao:arrangements},
set $\oS := S/\alpha_0 S$. Since, for $\theta \in D(\CA)$
we have $\theta(\alpha_0 S) \subseteq \alpha_0 S$,
we may define $\overline{\theta} : \oS \to \oS$ by
$\overline{\theta}(f + \alpha_0 S) = \theta(f) + \alpha_0 S$.
We recall some properties of 
this construction, \cite[Prop.\ 4.44, Prop.\ 4.57]{orlikterao:arrangements}.

\begin{proposition}
\label{prop:q}
If $\theta \in D(\CA)$, then $\overline{\theta} \in D(\CA'')$.
Moreover, if $\overline{\theta} \ne 0$, then 
$\pdeg \overline{\theta} = \pdeg \theta$.
\end{proposition}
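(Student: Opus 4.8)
The plan is to reduce both claims to the factorisation $Q(\CA)=\prod_{H\in\CA}\alpha_H$ together with the elementary fact that $D(fg)=D(f)\cap D(g)$ whenever $f,g\in S$ are coprime. Since distinct hyperplanes are cut out by non‑proportional (hence pairwise coprime) linear forms, this gives $D(\CA)=\bigcap_{H\in\CA}D(\alpha_H)$, and likewise $D(\CA'')=\bigcap_{X\in\CA''}D(\alpha_X)$; in particular $\theta(\alpha_H)\in\alpha_H S$ for every $H\in\CA$, which is exactly the property already used in the setup to make $\overline\theta$ well defined. I would also record at the outset that $\overline\theta$ is genuinely a derivation of $\oS$: this is immediate from $\overline\theta(\bar f\bar g)=\overline{\theta(fg)}=\overline{\theta(f)g+f\theta(g)}=\overline\theta(\bar f)\bar g+\bar f\overline\theta(\bar g)$, since the reduction map $S\to\oS$ is a ring homomorphism.

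For the first assertion, I would fix $X\in\CA''$ and choose $H\in\CA'$ with $X=H_0\cap H$. As $H\neq H_0$, the form $\alpha_H$ is not a scalar multiple of $\alpha_0$, so its image $\overline{\alpha_H}\in\oS$ is a nonzero linear form; thinking of $\oS$ as the coordinate ring of $H_0$, this form cuts out $X$ inside $H_0$, so one may take $\alpha_X=\overline{\alpha_H}$. Now $\theta\in D(\CA)\subseteq D(\alpha_H)$ gives $\theta(\alpha_H)\in\alpha_H S$, and applying the reduction map yields $\overline\theta(\overline{\alpha_H})=\overline{\theta(\alpha_H)}\in\overline{\alpha_H}\,\oS$, i.e.\ $\overline\theta\in D(\alpha_X)$. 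Letting $X$ range over $\CA''$ then gives $\overline\theta\in\bigcap_X D(\alpha_X)=D(\CA'')$.

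For the second assertion, I would assume $\theta$ is homogeneous of polynomial degree $p$, say $\theta=\sum_{i=1}^\ell f_iD_i$ with $f_i\in S_p$, and choose the basis $x_1,\dots,x_\ell$ of $V^*$ so that $x_1=\alpha_0$. Then $\oS=\BBC[\bar x_2,\dots,\bar x_\ell]$, the $D_i$ for $i\ge 2$ descend to $\partial/\partial\bar x_i$, and $\bar D_2,\dots,\bar D_\ell$ is an $\oS$‑basis of $\Der(\oS)$. Since a derivation of a polynomial ring is determined by its values on the coordinates and $\overline\theta(\bar x_i)=\overline{\theta(x_i)}=\overline{f_i}$ for $2\le i\le\ell$, we get $\overline\theta=\sum_{i=2}^\ell\overline{f_i}\,\bar D_i$ with each $\overline{f_i}\in\oS_p$; hence if $\overline\theta\neq 0$, some $\overline{f_i}$ is a nonzero element of $\oS_p$, so $\overline\theta$ is homogeneous with $\pdeg\overline\theta=p=\pdeg\theta$. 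The one place that needs a little care, rather than any real difficulty, is the bookkeeping for $\CA''$: several hyperplanes of $\CA'$ can restrict to the same hyperplane of $H_0$, so $Q(\CA'')$ is only the squarefree part of $\overline{Q(\CA')}$ and not that polynomial itself, which is why I prefer to argue hyperplane by hyperplane through $D(\CA'')=\bigcap_X D(\alpha_X)$ rather than manipulating defining polynomials directly.
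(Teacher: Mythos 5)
Your proof is correct. Note that the paper does not actually prove this proposition but simply quotes it from Orlik--Terao (\cite[Prop.\ 4.44, Prop.\ 4.57]{orlikterao:arrangements}); your argument --- reducing to $D(\CA)=\bigcap_{H\in\CA}D(\alpha_H)$ via coprimality, checking $\overline\theta\in D(\alpha_X)$ hyperplane by hyperplane (rightly avoiding $\overline{Q(\CA')}$, which need not be reduced), and computing $\pdeg\overline\theta$ in coordinates with $x_1=\alpha_0$ --- is essentially the standard proof given there, so there is nothing to fix.
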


Thanks to Proposition \ref{prop:q}, we obtain a ``degree-preserving'' map
\[
q : D(\CA) \to D(\CA'') \ \text{ by }\ \theta \mapsto \overline{\theta}.
\]

\begin{proposition}
\label{prop:qonto}
Suppose that both $\CA$ and $\CA''$ are free.
Then $q : D(\CA) \to D(\CA'')$ is onto if and only if 
$\CA'$ is free.
\end{proposition}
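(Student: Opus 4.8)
The statement to prove is Proposition~\ref{prop:qonto}: if $\CA$ and $\CA''$ are free, then $q:D(\CA)\to D(\CA'')$ is onto if and only if $\CA'$ is free.

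The plan is to analyze the kernel and image of $q$ via the exact sequence that naturally arises from the construction of $\oS$ and the map $\theta\mapsto\overline\theta$. First I would identify $\ker q$: a derivation $\theta\in D(\CA)$ satisfies $\overline\theta=0$ precisely when $\theta(f)\in\alpha_0 S$ for all $f\in S$, equivalently every coefficient of $\theta$ (in the basis $D_1,\dots,D_\ell$) lies in $\alpha_0 S$, so $\theta=\alpha_0\eta$ for some $\eta\in\Der(S)$. One then checks that $\alpha_0\eta\in D(\CA)$ forces $\eta\in D(\CA')$: indeed $\alpha_0\eta(Q(\CA))\in Q(\CA)S$ means $\alpha_0\eta(\alpha_0 Q(\CA'))\in \alpha_0 Q(\CA')S$, and dividing by $\alpha_0$ and using that $\eta(\alpha_0)$ times $Q(\CA')$ plus $\alpha_0\eta(Q(\CA'))$ lies in $Q(\CA')S$ shows $\alpha_0\mid \eta(Q(\CA'))$ modulo $Q(\CA')$, hence $\eta(Q(\CA'))\in Q(\CA')S$. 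Conversely $\alpha_0\eta\in D(\CA)$ whenever $\eta\in D(\CA')$. Thus $\ker q=\alpha_0 D(\CA')$, giving a short exact sequence of graded $S$-modules
\[
0\longrightarrow \alpha_0 D(\CA')\longrightarrow D(\CA)\stackrel{q}{\longrightarrow} \operatorname{im} q\longrightarrow 0,
\]
where $\operatorname{im} q\subseteq D(\CA'')$, and $\alpha_0 D(\CA')\cong D(\CA')$ as $S$-modules up to a degree shift by $1$.

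Next I would bring in the hypotheses. Since $\CA$ is free, $D(\CA)$ is free of rank $\ell$; since $\CA''$ is free, $D(\CA'')$ is free of rank $\ell-1$ over $\oS$. The idea is to compare Poincaré series (Hilbert series). Using the short exact sequence above, the Hilbert series of $D(\CA)$ equals $t\cdot H(D(\CA'))+H(\operatorname{im} q)$, where $H(-)$ denotes the Poincaré series in the grading variable $t$. If $q$ is onto, then $\operatorname{im} q=D(\CA'')$, whose Hilbert series is known from freeness of $\CA''$, and one solves for $H(D(\CA'))$; combined with the fact (standard, e.g.\ \cite[Cor.\ 4.23, Prop.\ 4.26]{orlikterao:arrangements}) that a graded arrangement module with the Hilbert series of a free module of the right rank is itself free — or more directly via the criterion that $\theta_1,\dots,\theta_\ell$ form a basis iff their degrees sum correctly and their "Saito determinant" is a nonzero scalar times $Q(\CA)$ — one deduces $\CA'$ is free. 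For the converse, if $\CA'$ is free, the same Hilbert-series bookkeeping forces $H(\operatorname{im} q)=H(D(\CA''))$; since $\operatorname{im} q\subseteq D(\CA'')$ is a graded submodule with the same Hilbert series, equality $\operatorname{im} q=D(\CA'')$ follows, i.e.\ $q$ is onto.

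The main obstacle I anticipate is not the kernel computation (which is essentially formal) but making the Hilbert-series comparison rigorous in both directions without circularity: one must know that $\operatorname{im} q$, a priori merely a submodule of a free module, is forced to be free (or to be all of $D(\CA'')$) purely from its Hilbert series, and for that one typically invokes Saito's criterion together with the behavior of the Saito determinant under the map $q$ — concretely, if $\theta_1,\dots,\theta_\ell$ is a homogeneous basis of $D(\CA)$ then one argues that (after reordering) $\overline\theta_1,\dots,\overline\theta_{\ell-1}$ are nonzero and $q$-onto exactly when the determinant $\det(\theta_i(x_j))$, which is a scalar multiple of $Q(\CA)=\alpha_0 Q(\CA')$, has its cofactor expansion reflecting that $\theta_\ell$ maps into $\alpha_0\Der(S)$; tracking this is the technical heart. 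An alternative, cleaner route avoids determinants entirely: use that for graded modules over the polynomial ring, a surjection between two modules of the same (finite) Hilbert series is an isomorphism, so freeness of $\CA'$ (equivalently, of $\ker q=\alpha_0 D(\CA')$, up to shift) together with freeness of $\CA$ pins down $H(\operatorname{im} q)$ and hence gives surjectivity onto $D(\CA'')$; and conversely surjectivity pins down $H(\ker q)$, and a graded submodule of a free module — here $\ker q\cong D(\CA')(-1)$ sitting in $D(\CA)$ — with the Hilbert series of a rank-$\ell$ free module is free by \cite[Cor.\ 4.24]{orlikterao:arrangements} or the analogous standard fact, whence $\CA'$ is free. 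I would present the argument in this second form, citing the relevant lemmas from \cite{orlikterao:arrangements} for the Hilbert-series and freeness-detection facts.
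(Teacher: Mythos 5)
First, be aware that the paper does not actually prove Proposition~\ref{prop:qonto}: it is quoted as a recollection from \cite[Prop.\ 4.44, Prop.\ 4.57]{orlikterao:arrangements}, so your argument can only be measured against the standard proof there, not against anything in this paper.

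Your computation of the kernel, $\ker q = \alpha_0 D(\CA')\cong D(\CA')(-1)$, and the resulting exact sequence $0\to\alpha_0 D(\CA')\to D(\CA)\to\operatorname{im}q\to 0$ are correct and are indeed the standard starting point. The two Hilbert-series steps, however, both have genuine gaps. In the direction ``$\CA'$ free $\Rightarrow$ $q$ onto'', the exact sequence gives only $H(\operatorname{im}q)=H(D(\CA))-t\,H(D(\CA'))$, and the inclusion $\operatorname{im}q\subseteq D(\CA'')$ gives only a coefficientwise inequality; to conclude equality you would need the identity $\sum_i t^{b_i}-\sum_i t^{c_i+1}=(1-t)\sum_j t^{d_j}$ relating the exponents of $\CA$, $\CA'$ and $\CA''$. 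That relation is precisely the exponent statement of the Addition--Deletion Theorem, which is proved \emph{from} the present proposition; the bare hypothesis that all three arrangements are free does not supply it, so this direction is circular as written. In the direction ``$q$ onto $\Rightarrow$ $\CA'$ free'', the principle you invoke --- that a graded submodule of a free module whose Hilbert series equals that of a free module of the correct rank must itself be free --- is not a theorem (and is not what \cite[Cor.\ 4.24]{orlikterao:arrangements} says); Hilbert series do not detect freeness once $\ell\ge 3$. This direction does admit a clean non-enumerative repair: if $q$ is onto, then $\alpha_0 D(\CA')$ is the kernel of a surjection from the free $S$-module $D(\CA)$ onto $D(\CA'')$, and $D(\CA'')$, being free over $\oS=S/\alpha_0 S$, has projective dimension $1$ over $S$; hence the kernel is projective, hence graded free, hence $D(\CA')$ is free. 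For the converse direction one really does need Saito's criterion (or the determinant/ideal argument of Orlik--Terao) --- exactly the ``technical heart'' you flagged but did not carry out.
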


We obtain the following immediate 
consequence of  
Propositions \ref{prop:q} and \ref{prop:qonto}.

\begin{corollary}
\label{cor:q}
Suppose that $\CA$ and $\CA''$ are free
and $\exp \CA'' \not\subseteq \exp \CA$.
Then $\CA'$ is not free.
\end{corollary}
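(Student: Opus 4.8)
The statement to prove is: if $\CA$ and $\CA''$ are free and $\exp \CA'' \not\subseteq \exp \CA$, then $\CA'$ is not free. The plan is to argue by contradiction, feeding the hypothesis into Proposition \ref{prop:qonto}. So first I would assume for contradiction that $\CA'$ is free. Then all three arrangements $\CA$, $\CA'$, $\CA''$ in the triple $(\CA, \CA', \CA'')$ associated to some fixed $H_0 \in \CA$ are free, and in particular $\CA \neq \Phi_\ell$ (since $\CA$ contains $H_0$), so the Addition--Deletion Theorem \ref{thm:add-del} applies.

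The core of the argument is then to extract from freeness of all three members of the triple the containment $\exp \CA'' \subseteq \exp \CA$, contradicting the hypothesis. There are two natural routes, and I would take the first as it uses exactly the propositions just set up. Since $\CA$ and $\CA''$ are free and $\CA'$ is free (by assumption), Proposition \ref{prop:qonto} tells us that the degree-preserving map $q : D(\CA) \to D(\CA'')$ is onto. Now pick a homogeneous basis $\theta_1, \ldots, \theta_\ell$ of the free $S$-module $D(\CA)$, so that $\exp \CA = \{\pdeg \theta_1, \ldots, \pdeg \theta_\ell\}$. Their images $q(\theta_1), \ldots, q(\theta_\ell) = \overline{\theta_1}, \ldots, \overline{\theta_\ell}$ span $D(\CA'')$ over $\oS$ because $q$ is onto and $S$-linear; discarding those $\overline{\theta_i}$ that vanish and extracting an $\oS$-basis from the rest, we get a homogeneous generating set, hence basis, of $D(\CA'')$ whose polynomial degrees form a sub-multiset of $\{\pdeg \theta_1, \ldots, \pdeg \theta_\ell\}$ (here we use the second part of Proposition \ref{prop:q}: whenever $\overline{\theta_i} \neq 0$, its polynomial degree equals $\pdeg \theta_i$). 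Since $|\CA''| = |\CA| - 1$ forces $\CA''$ to have exactly $\ell - 1$ exponents, exactly one $\overline{\theta_i}$ is discarded, and the resulting $\ell-1$ degrees give $\exp \CA'' \subseteq \exp \CA$, the desired contradiction.

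The only delicate point is the passage from ``$q$ is onto'' to ``the images of a basis of $D(\CA)$ contain a basis of $D(\CA'')$ with degrees among those of $\exp \CA$'': one must check that a homogeneous spanning set of a free graded module admits a subset that is a homogeneous basis, and that the degrees behave as claimed. This is standard graded Nakayama-type bookkeeping, and in fact it is cleaner to bypass it entirely by invoking the Addition--Deletion Theorem directly: with $\CA$, $\CA'$, $\CA''$ all free, Theorem \ref{thm:add-del} (the implication from the first and third statements, or from the first two) gives $\exp \CA = \{b_1, \ldots, b_{\ell-1}, b_\ell\}$ and $\exp \CA'' = \{b_1, \ldots, b_{\ell-1}\}$, so $\exp \CA'' \subseteq \exp \CA$ immediately, contradicting the hypothesis. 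I expect the write-up to use this second, shorter route; the first route is the conceptual reason but the Addition--Deletion formulation makes the exponent comparison transparent. Either way, the contradiction shows $\CA'$ cannot be free. $\square$
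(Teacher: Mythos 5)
Your first route is exactly the paper's argument, stated there in the direct rather than contrapositive form: since $q$ preserves polynomial degrees (Proposition \ref{prop:q}), the hypothesis $\exp \CA'' \not\subseteq \exp \CA$ forces $q$ not to be onto, and Proposition \ref{prop:qonto} then gives that $\CA'$ is not free. One caveat on the shortcut you expected the paper to use: Theorem \ref{thm:add-del} as stated needs the exponents to match in two of its three clauses, so from mere freeness of $\CA$ and $\CA'$ you would have to invoke the refinement \cite[Cor.\ 4.47]{orlikterao:arrangements} (which the paper cites elsewhere) rather than Theorem \ref{thm:add-del} itself to conclude $\exp \CA'' \subseteq \exp \CA$.
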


\begin{proof}
Since $\exp \CA'' \not\subseteq \exp \CA$, it 
follows from the second assertion of 
Proposition \ref{prop:q} that $q$ is not onto.
Consequently, $\CA'$ is not free,
by Proposition \ref{prop:qonto}.
\end{proof}

\subsection{Inductively Free Arrangements}
\label{ssect:indfree}

Theorem \ref{thm:add-del} motivates the notion of 
\emph{inductively free} arrangements,   
\cite[Def.\ 4.53]{orlikterao:arrangements}.

\begin{defn}
\label{def:indfree}
The class $\CIF$ of \emph{inductively free} arrangements 
is the smallest class of arrangements subject to
\begin{itemize}
\item[(i)] $\Phi_\ell \in \CIF$ for each $\ell \ge 0$;
\item[(ii)] if there exists a hyperplane $H_0 \in \CA$ such that both
$\CA'$ and $\CA''$ belong to $\CIF$, and $\exp \CA '' \subseteq \exp \CA'$, 
then $\CA$ also belongs to $\CIF$.
\end{itemize}
\end{defn}

\begin{remark}
\label{rem:indtable}
It is possible to describe an inductively free arrangement $\CA$ by means of 
a so called 
\emph{induction table}, cf.~\cite[\S 4.3, p.~119]{orlikterao:arrangements}.
In this process we start with an inductively free arrangement
(frequently $\Phi_\ell$) and add hyperplanes successively ensuring that 
part (ii) of Definition \ref{def:indfree} is satisfied.
This process is referred to as \emph{induction of hyperplanes}.
This procedure amounts to 
choosing a total order on $\CA$, say 
$\CA = \{H_1, \ldots, H_n\}$, 
so that each of the subarrangements 
$\CA_0 := \Phi_\ell$, $\CA_i := \{H_1, \ldots, H_i\}$
and each of the restrictions $\CA_i^{H_i}$ is inductively free
for $i = 1, \ldots, n$.
In the associated induction table we record in the $i$-th row the information 
of the $i$-th step of this process, by 
listing $\exp \CA_i' = \exp \CA_{i-1}$, 
the defining form $\alpha_{H_i}$ of $H_i$, 
as well as $\exp \CA_i'' = \exp \CA_i^{H_i}$, 
for $i = 1, \ldots, n$.
For instance, see Tables \ref{indtable:non-HIF}, 
\ref{indtable1}, \ref{indtable:g26}, and \ref{indtableg32} below. 
\end{remark}

Our next result shows that
the compatibility of products and free arrangements
from Proposition \ref{prop:product-free}  
restricts to the class of 
inductively free arrangements. 

\begin{proposition}
\label{prop:product-indfree}
Let $(\CA_1, V_1), (\CA_2, V_2)$ be two arrangements.
Then  $\CA = (\CA_1 \times \CA_2, V_1 \oplus V_2)$ is 
inductively free if and only if both 
$(\CA_1, V_1)$ and $(\CA_2, V_2)$ are 
inductively free and in that case
the multiset of exponents of $\CA$ is given by 
$\exp \CA = \{\exp \CA_1, \exp \CA_2\}$.
\end{proposition}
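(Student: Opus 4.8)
The plan is to prove both implications by induction, using the total-order/induction-table formulation of inductive freeness recorded in Remark \ref{rem:indtable}, together with the product decomposition of restrictions in \eqref{eq:restrproduct} and the compatibility of products with ordinary freeness from Proposition \ref{prop:product-free}. Throughout, write $\ell_i = \dim V_i$, so $\CA$ lives in an $(\ell_1+\ell_2)$-dimensional space.

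For the ``only if'' direction I would argue by induction on $n = |\CA| = |\CA_1| + |\CA_2|$. If $n = 0$ both factors are empty arrangements, hence inductively free, and the exponent statement is trivial. If $n > 0$, since $\CA$ is inductively free there is a hyperplane $H_0 \in \CA$ with $\CA'$ and $\CA''$ inductively free and $\exp \CA'' \subseteq \exp \CA'$. By the definition of the product \eqref{eq:product}, $H_0$ is of the form $K_0 \oplus V_2$ with $K_0 \in \CA_1$ (or symmetrically $V_1 \oplus K_0$ with $K_0 \in \CA_2$); assume the former. Then $\CA' = \CA_1' \times \CA_2$ where $\CA_1' = \CA_1 \setminus \{K_0\}$, and by \eqref{eq:restrproduct}, $\CA'' = \CA^{H_0} = \CA_1^{K_0} \times \CA_2 = \CA_1'' \times \CA_2$. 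Since $|\CA'| = n-1$ and $|\CA''| < n$, the inductive hypothesis applies to both: $\CA_1'$, $\CA_1''$, and $\CA_2$ are all inductively free, with $\exp \CA' = \{\exp \CA_1', \exp \CA_2\}$ and $\exp \CA'' = \{\exp \CA_1'', \exp \CA_2\}$. From $\exp \CA'' \subseteq \exp \CA'$ and a multiset-cancellation argument (removing the common part $\exp \CA_2$) I get $\exp \CA_1'' \subseteq \exp \CA_1'$. Now $(\CA_1, \CA_1', \CA_1'')$ is the triple for $K_0 \in \CA_1$, with $\CA_1'$ and $\CA_1''$ inductively free and $\exp \CA_1'' \subseteq \exp \CA_1'$, so Definition \ref{def:indfree}(ii) gives $\CA_1 \in \CIF$. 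Thus both $\CA_1$ and $\CA_2$ are inductively free, and the exponent formula for $\CA$ follows from Proposition \ref{prop:product-free} (or by adding back $K_0$ in the induction table).

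For the ``if'' direction, suppose $\CA_1, \CA_2$ are inductively free; I want $\CA_1 \times \CA_2 \in \CIF$. The idea is to build an induction table for $\CA_1 \times \CA_2$ by concatenating one for $\CA_1$ with one for $\CA_2$: first realize $\CA_1 \times \Phi_{\ell_2}$ by running the induction table of $\CA_1$ in the larger space $V_1 \oplus V_2$ — each step adds a hyperplane $K \oplus V_2$, and by \eqref{eq:restrproduct} the relevant restriction is $(\CA_1)_i^{K} \times \Phi_{\ell_2}$, which is inductively free with the correct exponents by the inductive step in $V_1$ (the extra $\Phi_{\ell_2}$ just contributes a block of $\ell_2$ zero exponents on both sides of each row, so the containment $\exp'' \subseteq \exp'$ is preserved). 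This shows $\CA_1 \times \Phi_{\ell_2} \in \CIF$. Then append the induction table of $\CA_2$: each further step adds a hyperplane $V_1 \oplus L$ for $L$ ranging over $\CA_2$ in its chosen order, and by \eqref{eq:restrproduct} the restriction at that step is $\CA_1 \times (\CA_2)_j^{L}$; here one uses that $\CA_1$ is inductively free (already shown) and that $(\CA_2)_j^{L}$ is inductively free from $\CA_2$'s table, so the product of the two is inductively free by the ``only if'' direction already established — wait, more carefully, by the inductive hypothesis of an outer induction on $|\CA_2|$, or simply because this intermediate product has strictly smaller second factor; and the exponent containment needed for Definition \ref{def:indfree}(ii) again survives because we are comparing $\{\exp \CA_1, \exp (\CA_2)_j'\}$ with $\{\exp \CA_1, \exp (\CA_2)_j''\}$ and the containment holds in the $\CA_2$-part. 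Structuring this cleanly as a single induction on $|\CA_2|$ (with the case $\CA_2 = \Phi_{\ell_2}$ handled by the paragraph above) avoids circularity.

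The main obstacle is bookkeeping rather than conceptual: one must be careful that at \emph{every} intermediate stage of the concatenated induction table, both the deletion and the restriction are genuinely of product form with one factor already known to be inductively free, so that the exponent-containment hypothesis of Definition \ref{def:indfree}(ii) can be verified via the multiset identity $\exp(\CB_1 \times \CB_2) = \{\exp \CB_1, \exp \CB_2\}$. The cleanest packaging is: prove ``only if'' first by induction on $|\CA|$ as above, then prove ``if'' by induction on $|\CA_2|$, invoking \eqref{eq:restrproduct}, Definition \ref{def:indfree}(ii), and Proposition \ref{prop:product-free} at each step; the exponent formula in the statement then comes for free from Proposition \ref{prop:product-free} once inductive freeness is established.
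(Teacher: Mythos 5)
Your proposal is correct and follows essentially the same route as the paper's proof: both directions proceed by induction on the number of hyperplanes, using the product decompositions $\CA\setminus\{H\} = (\CA_1\setminus\{H_1\})\times\CA_2$ and $\CA^H = \CA_1^{H_1}\times\CA_2$, the multiset-cancellation step to transfer the exponent containment $\exp\CA''\subseteq\exp\CA'$ to the factor $\CA_1$, and Proposition \ref{prop:product-free} for the exponent formula. The only difference is organizational: the paper runs a single induction on $|\CA_1|+|\CA_2|$ for the ``if'' direction rather than your nested induction on $|\CA_2|$ with a sub-induction for the base case $\CA_1\times\Phi_{\ell_2}$, but the content is identical.
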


\begin{proof}
Let $H = H_1  \oplus V_2 \in \CA$ with $H_1 \in \CA_1$,
cf.\ \eqref{eq:product}.
Then
$\CA^H = \CA_1^{H_1} \times \CA_2^{V_2} = \CA_1^{H_1} \times \CA_2$,
thanks to \eqref{eq:restrproduct}.
Likewise, for 
$H = V_1  \oplus H_2 \in \CA$ with $H_2 \in \CA_2$, 
we have $\CA^H =  \CA_1 \times \CA_2^{H_2}$.

Moreover, 
for $H = H_1  \oplus V_2 \in \CA$ with $H_1 \in \CA_1$,
we have $\CA\setminus \{H\} = (\CA_1\setminus \{H_1\}) \times \CA_2$
and likewise for $H = V_1  \oplus H_2 \in \CA$ with $H_2 \in \CA_2$, 
we have $\CA\setminus \{H\} =  \CA_1 \times (\CA_2\setminus \{H_2\})$.

First suppose that both 
$\CA_1$ and $\CA_2$ are inductively free. 
We show that $\CA = \CA_1 \times \CA_2$ is inductively free
by induction on $n = |\CA| = |\CA_1| + |\CA_2|$. For 
$n = 0$ we have $\CA = \Phi_\ell$ and there is nothing to prove.
Now suppose that $n \ge 1$.
So we may assume that there are $H_i \in \CA_i$ so that 
$\CA_i \setminus \{H_i\}$ and 
$\CA_i^{H_i}$ are  inductively free and that 
$\exp \CA_i^{H_i} \subseteq \exp (\CA_i \setminus \{H_i\})$ 
 for $i = 1,2$ or else one of the $\CA_i$ is empty.

Without loss, assume that  $H = H_1  \oplus V_2 \in \CA$ with $H_1 \in \CA_1$,
so that $\CA^H = \CA_1^{H_1} \times \CA_2$. 
(The case when $H = V_1  \oplus H_2 \in \CA$ with $H_2 \in \CA_2$
is treated in a similar way.)
Then, since
$\CA^H = \CA_1^{H_1} \times \CA_2$ and $|\CA^H| < n$,
it follows from 
our induction hypothesis and the assumptions on $\CA_1$ and $\CA_2$
that $\CA^H$ is inductively free 
and that 
$\exp \CA^H = \{\exp  \CA_1^{H_1},  \exp \CA_2\}$.

Further, since 
$\CA\setminus \{H\} = \CA_1\setminus \{H_1\} \times \CA_2$ and $|\CA\setminus \{H\}| < n$, 
we conclude by our induction hypothesis and the assumptions on $\CA_1$ and $\CA_2$ that 
$\CA\setminus \{H\}$ is inductively free and that 
$\exp (\CA\setminus \{H\}) = \{ \exp(\CA_1\setminus \{H_1\}), \exp \CA_2\}$.
Since $ \exp  \CA_1^{H_1} \subseteq \exp(\CA_1\setminus \{H_1\})$, 
we obtain
\[
\exp \CA^H= \{ \exp  \CA_1^{H_1},  \exp \CA_2 \}  \subseteq  
\{ \exp(\CA_1\setminus \{H_1\}), \exp \CA_2\} = \exp (\CA\setminus \{H\}).  
\]
Consequently, $\CA$ is inductively free, as claimed.

Conversely, suppose that $\CA$ is inductively free. We show that both 
$\CA_1$ and $\CA_2$ are inductively free again by induction on $n = |\CA|$.

If $n = 0$, then $\CA = \Phi_\ell$ and so both $\CA_1$ and $\CA_2$
are empty and there is nothing to show.
So suppose that $n \ge 1$. Since $\CA$ is inductively free, 
there is a hyperplane $H$ in $\CA$, so that 
$(\CA, \CA\setminus \{H\}, \CA^H)$ is a triple of inductively free
arrangements with 
$\exp \CA^H \subseteq \exp \CA\setminus \{H\}$.
Without loss, we may assume that $H$ is of the form 
$H = H_1 \oplus V_2$ for some $H_1 \in \CA_1$.
Then $\CA\setminus \{H\} = (\CA_1\setminus \{H_1\}) \times \CA_2$ 
and $\CA^H = \CA_1^{H_1} \times \CA_2$. 

Since $|\CA\setminus \{H\}| < n$ and $|\CA^{H}| < n$,
it follows from our induction hypothesis,
the fact that both $\CA\setminus \{H\}$ and $\CA^{H}$
are  products and the 
assumption that both $\CA\setminus \{H\}$ and $\CA^H$ are inductively free,
that $\CA_1\setminus \{H_1\}$, $\CA_1^{H_1}$ and $\CA_2$
are inductively free.

Since $\exp \CA^H \subseteq \exp \CA\setminus \{H\}$, we get 
\[
\{\exp  \CA_1^{H_1}, \exp \CA_2 \} \subseteq 
\{ \exp (\CA_1\setminus \{H_1\}), \exp \CA_2 \}, 
\]
and since this is a containment of multisets, we can conclude that 
\[
\exp  \CA_1^{H_1} \subseteq \exp (\CA_1\setminus \{H_1\}).
\]
Thus, $\CA_1$ satisfies Definition \ref{def:indfree}(ii), so
$\CA_1$ is also inductively free.

The final statement on exponents  follows from 
Proposition \ref{prop:product-free}.
\end{proof}

There is yet an even stronger notion of freeness,
cf.\  \cite[\S 6.4, p.~253]{orlikterao:arrangements}.

\begin{defn}
\label{def:heredindfree}
The arrangement $\CA$ is called 
\emph{hereditarily inductively free} provided 
that $\CA^X$ is inductively free for each $X \in L(\CA)$.
We sometimes abbreviate this class by $\HIF$.
\end{defn}

Note, 
as $V \in L(\CA)$ and $\CA^V = \CA$, 
$\CA$ is inductively free, if it is 
hereditarily inductively free.

For instance, the empty arrangement $\Phi_\ell$ is
vacuously  hereditarily inductively free.

Using \eqref{eq:restrproduct} and Proposition \ref{prop:product-indfree},
Proposition \ref{prop:product-free}
restricts to the class of hereditarily  inductively free arrangements.

\begin{corollary}
\label{cor:product-heredindfree}
Let $\CA_1$ and $\CA_2$ be two arrangements.
Then  $\CA = \CA_1 \times \CA_2$ is 
hereditarily inductively free if and only if both
$\CA_1$ and $\CA_2$   are 
hereditarily inductively free and in that case
the multiset of exponents of $\CA$ is given by 
$\exp \CA = \{\exp \CA_1, \exp \CA_2\}$.
\end{corollary}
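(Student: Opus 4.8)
The plan is to deduce Corollary \ref{cor:product-heredindfree} from Proposition \ref{prop:product-indfree} together with the lattice decomposition \eqref{eq:restrproduct}, by checking the defining condition of hereditary inductive freeness (Definition \ref{def:heredindfree}) subspace by subspace. The key observation is that every $X \in L(\CA)$ decomposes, via the lattice isomorphism $L(\CA_1) \times L(\CA_2) \cong L(\CA)$ recalled in \S\ref{ssect:hyper}, as $X = X_1 \oplus X_2$ with $X_i \in L(\CA_i)$, and that by \eqref{eq:restrproduct} the corresponding restriction is itself a product, $\CA^X = \CA_1^{X_1} \times \CA_2^{X_2}$.

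First I would prove the forward implication. Suppose $\CA = \CA_1 \times \CA_2$ is hereditarily inductively free. To show $\CA_1$ is hereditarily inductively free, fix an arbitrary $X_1 \in L(\CA_1)$ and set $X := X_1 \oplus V_2 \in L(\CA)$. Then $\CA^X = \CA_1^{X_1} \times \CA_2^{V_2} = \CA_1^{X_1} \times \CA_2$ by \eqref{eq:restrproduct}, and this is inductively free by hypothesis. By Proposition \ref{prop:product-indfree}, both factors of this product are inductively free; in particular $\CA_1^{X_1}$ is inductively free. Since $X_1$ was arbitrary, $\CA_1 \in \HIF$, and symmetrically $\CA_2 \in \HIF$. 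Conversely, assume both $\CA_1$ and $\CA_2$ are hereditarily inductively free, and let $X \in L(\CA)$ be arbitrary. Write $X = X_1 \oplus X_2$ with $X_i \in L(\CA_i)$; then $\CA^X = \CA_1^{X_1} \times \CA_2^{X_2}$ by \eqref{eq:restrproduct}. Each $\CA_i^{X_i}$ is inductively free by hypothesis, so by Proposition \ref{prop:product-indfree} the product $\CA^X$ is inductively free. As $X$ was arbitrary, $\CA \in \HIF$.

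The statement on exponents is immediate: taking $X = V = V_1 \oplus V_2$ gives $\CA = \CA^V = \CA_1 \times \CA_2$, and Proposition \ref{prop:product-free} (or the final assertion of Proposition \ref{prop:product-indfree}) yields $\exp \CA = \{\exp \CA_1, \exp \CA_2\}$.

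There is really no serious obstacle here: the corollary is a formal consequence of the already-established product compatibility of inductive freeness, and the only thing one must be slightly careful about is the direction of the lattice isomorphism — namely that $X_i \mapsto X_1 \oplus X_2$ is a bijection $L(\CA_1) \times L(\CA_2) \to L(\CA)$, so that ranging over all $X \in L(\CA)$ is the same as ranging over all pairs $(X_1, X_2)$, and that when restricting $\CA_1$ to $X_1$ one legitimately uses the companion subspace $V_2 \in L(\CA_2)$ to build a genuine element of $L(\CA)$. Both points are covered by the material recalled in \S\ref{ssect:hyper}, so the proof is short.
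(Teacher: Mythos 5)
Your proof is correct and follows essentially the same route as the paper: both directions reduce to the lattice isomorphism $L(\CA_1)\times L(\CA_2)\cong L(\CA)$, the restriction formula $\CA^X=\CA_1^{X_1}\times\CA_2^{X_2}$ from \eqref{eq:restrproduct}, and Proposition \ref{prop:product-indfree}, with the exponent statement coming from Proposition \ref{prop:product-free}. The only cosmetic difference is that in the forward direction you specialize to $X=X_1\oplus V_2$ while the paper works with a general $X=X_1\oplus X_2$; both are fine.
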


\begin{proof}
First suppose that both $\CA_1$ and $\CA_2$ are
 hereditarily  inductively free.
Let $X = X_1 \oplus X_2$ be in $L(\CA)$.
Then, by \eqref{eq:restrproduct} and Proposition \ref{prop:product-indfree},
$\CA^X = \CA_1^{X_1} \times \CA_2^{X_2}$ is 
inductively free.

Conversely, suppose that 
$\CA \in \HIF$. 
Let $X_i \in L(\CA_i)$ for $i=1,2$. 
Then $X = X_1 \oplus X_2 \in L(\CA)$.
By \eqref{eq:restrproduct} and Proposition \ref{prop:product-indfree},
both $\CA_1^{X_1}$ and $\CA_2^{X_2}$ are
inductively free. 

The final statement on exponents  follows from Proposition \ref{prop:product-free}.
\end{proof}

Owing to \cite[Def.\ 4.7; Prop.\ 4.27]{orlikterao:arrangements}
and \cite[Ex.\ 4.20]{orlikterao:arrangements},
all $1$- and $2$-arrangements are known to be free.
Next we observe that they are also always 
hereditarily  inductively free.

\begin{example}
\label{ex:1-arr}
Any $1$-arrangement $\CA$ is hereditarily 
inductively free.
If $\CA = \Phi_1$, there is nothing to prove.
So let $\CA = (\{0\}, \BBC)$.
Then 
$(\CA, \CA', \CA'') = (\CA, \Phi_1, \Phi_0)$
is a triple 
with $\CA', \CA'' \in \CIF$ and
$\exp \CA'' = \varnothing \subset \{0\} = \exp \CA'$.  
So, by Definition \ref{def:indfree}, $\CA$ is 
inductively free with $\exp \CA =\{1\}$.
Since $\CA^{\{0\}} = \Phi_0$,
we get $\CA \in \HIF$.
\end{example}

\begin{lemma}
\label{lem:2-arr}
Any $2$-arrangement is hereditarily inductively free.
\end{lemma}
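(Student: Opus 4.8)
The plan is to argue by induction on $n = |\CA|$, the number of lines in the $2$-arrangement $\CA$ in $V = \BBC^2$. By Definition \ref{def:heredindfree} I must show $\CA^X$ is inductively free for every $X \in L(\CA)$. The lattice of a central $2$-arrangement is simple: $L(\CA)$ consists of $V$ itself (with $\CA^V = \CA$), the origin $\{0\}$ (with $\CA^{\{0\}} = \Phi_0$, which is inductively free by fiat), and nothing else when $n \ge 2$, since any two distinct lines through the origin already meet in $\{0\}$. When $n \le 1$ the arrangement is handled by Example \ref{ex:1-arr}. So the only content is to show that $\CA$ itself is inductively free; heredity is then automatic because the only proper restriction is $\Phi_0$.

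To see that $\CA$ is inductively free, I would pick any line $H_0 \in \CA$ and form the triple $(\CA, \CA', \CA'')$ with $\CA' = \CA \setminus \{H_0\}$ a $2$-arrangement of $n-1$ lines and $\CA'' = \CA^{H_0} = \Phi_0$ the empty $0$-arrangement (its single point $H_0 \cap H$ for $H \ne H_0$ collapses to the origin, so $\CA''$ has no hyperplanes in the $0$-dimensional space $H_0$). By the induction hypothesis $\CA'$ is inductively free; one checks directly that a central $2$-arrangement of $k$ lines has $\exp = \{1, k-1\}$ (either by repeatedly applying Addition--Deletion starting from $\Phi_2$, or by citing that all $2$-arrangements are free with these exponents, \cite[Ex.\ 4.20]{orlikterao:arrangements}). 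Thus $\exp \CA' = \{1, n-2\}$ and $\exp \CA'' = \varnothing$, and trivially $\exp \CA'' = \varnothing \subseteq \exp \CA'$, so Definition \ref{def:indfree}(ii) applies and $\CA \in \CIF$ with $\exp \CA = \{1, n-1\}$. Since $\CA^X$ is then inductively free for all $X \in L(\CA)$, we conclude $\CA \in \HIF$.

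The argument is essentially bookkeeping; there is no real obstacle. The only point demanding a moment of care is the base of the induction and the identification $\CA^{H_0} = \Phi_0$: one must observe that restriction to a $1$-dimensional subspace $H_0$ of $\BBC^2$ sends every other line to the origin, so the restricted arrangement lives in a $0$-dimensional space and is empty, hence inductively free by Definition \ref{def:indfree}(i). Everything else follows from the structure of $L(\CA)$ for $2$-arrangements together with the inductive freeness of $\CA'$ and the triviality of the exponent containment $\varnothing \subseteq \exp \CA'$.
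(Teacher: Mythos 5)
Your overall strategy (induction on $n$, delete a hyperplane, verify the restriction, then observe that all restrictions are low-dimensional) matches the paper's, but there is a concrete error at its center: the identification $\CA^{H_0} = \Phi_0$. The hyperplane $H_0$ of a $2$-arrangement is a \emph{line} in $V = \BBC^2$, so the restricted arrangement $(\CA^{H_0}, H_0)$ lives in the $1$-dimensional space $H_0$, and for $n \ge 2$ it consists of exactly one hyperplane of $H_0$, namely $\{0\} = H_0 \cap H$ for any $H \ne H_0$. It is therefore the nonempty $1$-arrangement $(\{\{0\}\}, \BBC)$, not the empty $0$-arrangement, and $\exp \CA'' = \{1\}$, not $\varnothing$. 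Your triple $(\CA, \CA', \Phi_0)$ is dimensionally impossible: the restriction of an $\ell$-arrangement is an $(\ell-1)$-arrangement, and a free $1$-arrangement has exactly one exponent. The inductive step survives once this is corrected, since $\{1\} \subseteq \{1, n-2\} = \exp \CA'$ for $n \ge 3$ (and $\{1\} \subseteq \{0,1\}$ when $n = 2$); this is precisely what the paper does, citing Example \ref{ex:1-arr} for the inductive freeness of $\CA''$.

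The same confusion infects your description of $L(\CA)$ and hence the heredity step. Besides $V$ and $\{0\}$, the lattice contains each hyperplane $H \in \CA$ itself (as the intersection of the singleton subset $\{H\}$), so the proper restrictions are not only $\CA^{\{0\}} = \Phi_0$ but also the $n$ restrictions $\CA^H$, each a $1$-arrangement with one hyperplane. These are inductively free by Example \ref{ex:1-arr}, so the conclusion $\CA \in \HIF$ is unaffected, but the claim that ``the only proper restriction is $\Phi_0$'' is false, and the heredity check for the case $X = H \in \CA$ is missing from your argument.
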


\begin{proof}
Let $\CA$ be a $2$-arrangement.
First we show that $\CA$ is inductively free 
by induction on $n = |\CA|$.
If $n=0$, then $\CA = \Phi_2$ and there is nothing to prove.
Suppose that $n \ge 1$.
If $ n = 1$, then 
$(\CA, \CA', \CA'') = (\CA, \Phi_2, \Phi_1)$
is a triple 
with $\CA', \CA'' \in \CIF$ and
 $\exp \CA'' = \{0\} \subset \{0, 0\} = \exp \CA'$.  
So $\CA$ is 
inductively free with $\exp \CA =\{0, 1\}$.  
Now let $n \ge 2$.
By our inductive hypothesis, $\CA'$ is inductively free
with $\exp \CA' =\{1, n -2\}$.  
By Example \ref{ex:1-arr}, $\CA''$ is inductively free
with $\exp \CA'' =\{1\} \subset \exp \CA'$.  
Thus, by Definition \ref{def:indfree},
$\CA$ is inductively free with $\exp \CA =\{1, n -1\}$.

By Example \ref{ex:1-arr},
$\CA^H$ is inductively free for any $H \in \CA$.
If $H, H'$ are distinct hyperplanes in $\CA$, then $X = H \cap H' = \{0\}$
and so $\CA^X = \Phi_0$. 
Consequently, $\CA \in \HIF$. 
\end{proof}

In general, a free $3$-arrangement need not be inductively free,
see \cite[Ex.\ 4.59]{orlikterao:arrangements};
not even if it is a  reflection arrangement,
see Example \ref{ex:g333} below.
Nevertheless, for a $3$-arrangement, 
the two stronger notions of inductive freeness coincide.

\begin{lemma}
\label{lem:3-arr}
Suppose that $\ell = 3$. Then 
$\CA \in \CIF$ if and only if $\CA \in \HIF$.
\end{lemma}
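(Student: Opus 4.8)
The plan is to prove the non-trivial direction: if $\CA$ is inductively free and $\ell = 3$, then $\CA$ is hereditarily inductively free (the reverse implication is immediate from the remark following Definition \ref{def:heredindfree}). Let me sketch the approach.

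\medskip

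First I would observe that we only need to check that $\CA^X$ is inductively free for every $X \in L(\CA)$ with $X \neq V$. When $\dim X = 0$, we have $\CA^X = \Phi_0 \in \CIF$ trivially. When $\dim X = 1$, the restriction $\CA^X$ is a $1$-arrangement, hence inductively free by Example \ref{ex:1-arr}. So the only remaining cases are $\dim X = 2$, i.e.\ $X = H$ for some hyperplane $H \in \CA$ — in a $3$-arrangement, the codimension-$1$ elements of $L(\CA)$ are precisely the hyperplanes. Thus the statement reduces to: \emph{if $\CA$ is an inductively free $3$-arrangement, then $\CA^H$ is inductively free for every $H \in \CA$.} But $\CA^H$ is a $2$-arrangement, and by Lemma \ref{lem:2-arr} \emph{every} $2$-arrangement is (hereditarily) inductively free. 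Hence the claim follows for free, with no obstacle at all.

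\medskip

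Concretely, I would write: Since $\CA \in \HIF$ implies $\CA \in \CIF$ (as $V \in L(\CA)$ and $\CA^V = \CA$), only the forward implication requires proof. So assume $\CA \in \CIF$ with $\ell = 3$, and let $X \in L(\CA)$. If $X = V$, then $\CA^X = \CA \in \CIF$ by hypothesis. If $\dim X = 0$, then $\CA^X = \Phi_0 \in \CIF$. If $\dim X = 1$, then $\CA^X$ is a $1$-arrangement, so $\CA^X \in \CIF$ by Example \ref{ex:1-arr}. Finally, if $\dim X = 2$, then $X \in \CA$ and $\CA^X$ is a $2$-arrangement, whence $\CA^X \in \CIF$ by Lemma \ref{lem:2-arr}. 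In all cases $\CA^X \in \CIF$, so $\CA \in \HIF$.

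\medskip

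The main point worth noting — rather than an obstacle — is that the hypothesis $\CA \in \CIF$ is only genuinely used in the single case $X = V$; all the proper restrictions are automatically inductively free purely because they live in dimension $\leq 2$. This is exactly why the result is special to $\ell = 3$: for $\ell \geq 4$ one would encounter $3$-dimensional restrictions, where inductive freeness is a real constraint and can fail even for reflection arrangements (cf.\ Example \ref{ex:g333}), so the two notions genuinely diverge there, as Example \ref{ex:hindf-not-indf} confirms.
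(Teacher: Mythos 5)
Your proof is correct and follows essentially the same route as the paper: the reverse implication is immediate, and for the forward direction every proper restriction $\CA^X$ is a $d$-arrangement with $d\le 2$, hence inductively free by Example \ref{ex:1-arr} and Lemma \ref{lem:2-arr}. Your closing observation that the hypothesis $\CA\in\CIF$ is only used for $X=V$, and that this is exactly what breaks down for $\ell\ge 4$, is accurate and consistent with Example \ref{ex:hindf-not-indf}.
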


\begin{proof}
The reverse implication is clear.
So assume that $\CA \in \CIF$. 
Let $V \ne X \in L(\CA)$.
Then $\CA^X$ is a $d$-arrangement  for $d \le 2$ and 
so $\CA^X \in \CIF$, 
by Example \ref{ex:1-arr} and Lemma \ref{lem:2-arr}.
\end{proof}

Our next example shows that
$\HIF$ is a proper subclass of $\CIF$.
By Lemma \ref{lem:3-arr}, any such example 
can only occur in dimension at least $4$.

\begin{example}
\label{ex:hindf-not-indf}
Let $\CA$ be the $4$-arrangement defined by the 
$10$ forms $\alpha_H$ shown in 
column two of Table \ref{indtable:non-HIF},
where we denote the coordinate functions in $S$ simply 
by $a, b, c$ and $d$.
We claim that $\CA$ is inductively free but not hereditarily inductively 
free. 

That $\CA$ is inductively free follows from the data in 
the induction table of $\CA$  
in Table \ref{indtable:non-HIF}
below along with the fact that
each occurring restriction $\CA''$ is itself 
again inductively free with the given set of exponents. 
We have checked this latter condition directly.
We omit the details.

\begin{table}[h]
\begin{tabular}{lll}  \hline
  $\exp \CA'$ & $\alpha_H$ & $\exp \CA''$\\
\hline\hline
$0, 0, 0, 0$ {\hglue 5pt}	   &    $a-b+c-d$ {\hglue 5pt}				             & $0,0,0$ \\
$0, 0, 0, 1$ 	   &    $a+b+c+d$						             & $0,0,1$ \\
$0, 0, 1, 1$ 	   &    $a+b+c-d$						             & $0,1,1$ \\    
$0, 1, 1, 1$ 	   &    $a      $						             & $1,1,1$ \\
$1, 1, 1, 1$ 	   &    $b      $						             & $1,1,1$ \\
$1, 1, 1, 2$ 	   &    $a+b-c+d$						             & $1,1,2$ \\    
$1, 1, 2, 2$ 	   &    $d      $						             & $1,2,2$ \\    
$1, 2, 2, 2$ 	   &    $a-b+c+d$						             & $1,2,2$ \\
$1, 2, 2, 3$ 	   &    $a+b-c-d$						             & $1,2,3$ \\        
$1, 2, 3, 3$ 	   &    $c      $						             & $1,3,3$ \\
$1, 3, 3, 3$ 	   &                                     &         
\\\hline                                
\end{tabular}\\
\bigskip
\caption{Induction Table for $\CA \in \CIF\setminus \HIF$.} 
\label{indtable:non-HIF} 
\end{table}

Let  $H_d := \ker d \in \CA$.
Next we show that the restriction $\CB := \CA^{H_d}$ is not free.
The defining polynomial of $\CB$ is $Q_\CB =   abc(a-b+c)(a+b+c)(a+b-c)$.
Fix $H_0 = \ker c \in \CB$ and consider the triple
$(\CB, \CB', \CB'')$.
One checks that $\CB'$ is free with $\exp \CB' = \{1, 2, 2\}$.
Now if $\CB$ were free, it would follow from 
\cite[Cor.\ 4.47]{orlikterao:arrangements}
that $\CB''$ is free with $\exp \CB'' \subset \exp \CB'$.
However, one checks that 
$\exp \CB'' = \{1, 3\} \not\subset \exp \CB'$.
Consequently, $\CB$ is not free and 
thus $\CA$ is not hereditarily inductively free.

In particular, this example also provides an easy
counterexample to Orlik's conjecture from 1981
that every free arrangement is hereditarily free, cf.\ 
\cite[Ex.\ 4.141]{orlikterao:arrangements}.
\end{example}

\subsection{Reflection Arrangements}
\label{ssect:refl}
The irreducible finite complex reflection groups were 
classified by Shephard and Todd, \cite{shephardtodd}.
Suppose that $W \subseteq \GL(V)$ 
is a finite, complex reflection group acting on the complex
vector space $V=\BBC^\ell$.
The \emph{reflection arrangement} $\CA = \CA(W)$ of $W$ in $V$ is 
the hyperplane arrangement 
consisting of the reflecting hyperplanes of the elements in $W$
acting as reflections on $V$.

\begin{remark}
\label{rem:q-ref}
Terao \cite{terao:freeI} has shown that 
every reflection
arrangement $\CA = \CA(W)$ is free, see also
\cite[Prop.\ 6.59]{orlikterao:arrangements}.
Moreover, by
\cite[\S 6, App.~C]{orlikterao:arrangements}
and \cite{hogeroehrle:free},  
every restriction 
$\CA^H$ is also again free (for any choice of hyperplane $H$).
It thus follows for any triple $(\CA,\CA', \CA'')$  
that  $\CA'$ fails to be free provided 
$\exp \CA'' \not\subseteq \exp \CA$,
by Corollary \ref{cor:q}, and so 
$\CA$ is not inductively free if $W$ is transitive on $\CA$.
This argument was used in \cite{hogeroehrle:free}
to show that $\CA(G_{33})$ and  $\CA(G_{34})$ are not inductively free.

More generally, by the explicit data
on exponents of restrictions $\CA^H$ 
for reflection arrangements $\CA$
in \cite[\S 6, App.\ C]{orlikterao:arrangements}, 
one readily checks that  
either $\exp \CA^H \subseteq \exp \CA$
for \emph{every} $H \in \CA$ or this 
containment fails for any $H \in \CA$.
\end{remark}

We summarize the discussion from Remark \ref{rem:q-ref} 
in our next result
which provides a very useful criterion for showing that 
a given reflection arrangement is not inductively free.

\begin{corollary}
\label{cor:nonindfree}
Let $\CA$ be a reflection arrangement. 
If $\exp \CA'' \not\subseteq \exp \CA$, then  
$\CA$ is not inductively free. 
\end{corollary}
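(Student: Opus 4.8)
\emph{Proof proposal.} The plan is to argue by contradiction, using Corollary \ref{cor:q} together with the uniformity observation recorded in Remark \ref{rem:q-ref}. So suppose $\CA = \CA(W)$ is a reflection arrangement for which $\exp \CA'' \not\subseteq \exp \CA$, where $(\CA, \CA', \CA'')$ is the triple associated to some $H_0 \in \CA$; in particular $\CA \ne \Phi_\ell$. Assume, for contradiction, that $\CA \in \CIF$. Since $\CA$ has at least one hyperplane, Definition \ref{def:indfree}(ii) forces the existence of a hyperplane $H_1 \in \CA$ such that both $\CA \setminus \{H_1\}$ and $\CA^{H_1}$ lie in $\CIF$ and $\exp \CA^{H_1} \subseteq \exp(\CA \setminus \{H_1\})$; in particular, $\CA \setminus \{H_1\}$ is free.

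Next I would assemble the hypotheses of Corollary \ref{cor:q} for the triple $(\CA, \CA \setminus \{H_1\}, \CA^{H_1})$. By Terao's theorem (see Remark \ref{rem:q-ref}), $\CA$ is free; and $\CA^{H_1}$ is free, either because it is inductively free or, directly, because every restriction of a reflection arrangement is free by \cite[\S 6, App.\ C]{orlikterao:arrangements} and \cite{hogeroehrle:free}. It remains to see that $\exp \CA^{H_1} \not\subseteq \exp \CA$: this is exactly where the second assertion of Remark \ref{rem:q-ref} enters, namely that $\exp \CA^H \subseteq \exp \CA$ holds either for \emph{every} $H \in \CA$ or for \emph{no} $H \in \CA$; since it fails for $H_0$, it fails for $H_1$ as well. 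Corollary \ref{cor:q} then gives that $\CA \setminus \{H_1\}$ is not free, contradicting the conclusion of the previous paragraph. Hence $\CA \notin \CIF$, which is the assertion.

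The one non-formal input is the uniformity clause of Remark \ref{rem:q-ref} — that the containment $\exp \CA^H \subseteq \exp \CA$ does not depend on the choice of $H$ — since this is what lets us transfer the hypothesis from the distinguished hyperplane $H_0$ to the a priori unrelated hyperplane $H_1$ produced by inductive freeness; everything else is a purely formal combination of Corollary \ref{cor:q} with the definition of $\CIF$. That clause is itself established by inspecting the explicit exponent tables for reflection arrangements and their restrictions, and so in the present write-up it has already been disposed of in Remark \ref{rem:q-ref}. If one prefers not to invoke it, note that whenever $W$ acts transitively on $\CA$ — which covers all the cases relevant to Theorem \ref{main} — the restrictions $\CA^H$ are pairwise $W$-conjugate and hence share the same exponent multiset, so $\exp \CA^{H_0} \not\subseteq \exp \CA$ immediately yields $\exp \CA^{H_1} \not\subseteq \exp \CA$, and the argument above goes through verbatim.
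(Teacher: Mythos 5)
Your proof is correct and is essentially the argument the paper itself gives: the corollary is stated as a summary of Remark \ref{rem:q-ref}, whose content is exactly your combination of Terao's freeness of $\CA$ and of every restriction $\CA^H$, the uniformity of the containment $\exp \CA^H \subseteq \exp \CA$ over all $H \in \CA$ (checked from the exponent tables, or by transitivity when applicable), Corollary \ref{cor:q} to conclude that every deletion $\CA \setminus \{H\}$ fails to be free, and hence the impossibility of satisfying Definition \ref{def:indfree}(ii). You have correctly identified the one non-formal input, so nothing further is needed.
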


\begin{example}
\label{ex:g333}
Let $\CA$ be the reflection arrangement 
of the monomial group $G(3,3,3)$.
Then by \cite[Prop.\ 6.82, Prop.\ 6.85, Cor.\ 6.86]{orlikterao:arrangements},
we have 
$\exp \CA'' = \{1,3\} \not\subseteq \exp \CA = \{1,4,4\}$.
By Corollary \ref{cor:nonindfree},
$\CA$ is not inductively free.
In view of Theorem \ref{main}, this is the smallest
example of a reflection arrangement that is not inductively free.
With some additional work one can show that 
this is the smallest 
example of a free arrangement  in dimension $3$ that
fails to be inductively free; here $|\CA| = 9$.
One can check 
that every proper free subarrangement of $\CA$ is also inductively free. 
In \cite[Ex.\ 4.1]{ziegler}, Ziegler
gave a similar example of a free $3$-arrangement $\CB$ with 
$9$ hyperplanes which is not inductively free. 
One can show that the lattice of $\CB$ coindices with $L(\CA)$. 
\end{example}

We can extend Lemma \ref{lem:3-arr} 
for reflection arrangements  to dimension $4$ 
as follows.

\begin{lemma}
\label{lem:4-arr}
Suppose $\ell = 4$, 
$\CA = \CA(W)$ is a 
reflection arrangement,
and $W$ is transitive on $\CA$.
Then  $\CA$ is inductively free if and only if $\CA$ is 
 hereditarily inductively free.
\end{lemma}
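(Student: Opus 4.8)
The forward implication (hereditarily inductively free $\Rightarrow$ inductively free) is immediate, since $\CA = \CA^V$ and $V \in L(\CA)$. So the content is the reverse direction: assuming $\CA \in \CIF$, we must show $\CA^X \in \CIF$ for every $X \in L(\CA)$. Since $\ell = 4$, the proper subspaces $X \ne V$ have dimension $1$, $2$, or $3$; the cases $\dim X \le 2$ are already handled by Example \ref{ex:1-arr} and Lemma \ref{lem:2-arr}, which give that every $1$- and $2$-arrangement is hereditarily inductively free. Hence the only thing to check is that $\CA^H \in \CIF$ for every hyperplane $H \in \CA$, because $L(\CA)$ restricted below such an $H$ is already covered: if $\dim X \le 2$ then $\CA^X$ is inductively free outright, and the $3$-dimensional subspaces are exactly the $H \in \CA$.

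Now exploit transitivity: since $W$ is transitive on $\CA$, all the restrictions $\CA^H$ for $H \in \CA$ are linearly isomorphic (an element of $W$ carrying $H$ to $H'$ induces an isomorphism $\CA^H \cong \CA^{H'}$), so it suffices to show that a single $\CA^H$ is inductively free. Here is where I would invoke Corollary \ref{cor:nonindfree} in contrapositive form together with Remark \ref{rem:q-ref}: if for some (equivalently, by the dichotomy noted in Remark \ref{rem:q-ref}, every) $H \in \CA$ we had $\exp \CA^H \not\subseteq \exp \CA$, then $\CA$ would fail to be inductively free, contradicting our hypothesis. Therefore $\exp \CA^H \subseteq \exp \CA$. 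This is not yet enough to conclude $\CA^H \in \CIF$, but it restricts the possibilities drastically. The remaining step is to combine this with the known freeness of every $\CA^H$ (Remark \ref{rem:q-ref}) and the fact that $\CA^H$ is a $3$-arrangement: by Lemma \ref{lem:3-arr}, for a $3$-arrangement $\CIF$ and $\HIF$ coincide, so once we know $\CA^H \in \CIF$ we automatically get $\CA^H \in \HIF$, and then $\CA^{(H \cap H')} = (\CA^H)^{H\cap H'} \in \CIF$ for the smaller subspaces too, consistent with the above.

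The genuine work, then, is to pin down $\CA^H \in \CIF$ for the $3$-arrangement $\CA^H$ arising from a $4$-dimensional $W$ transitive on its arrangement. I would proceed by cases along the Shephard--Todd list of rank-$4$ irreducible complex reflection groups $W$ with $W$ transitive on $\CA(W)$ (the relevant ones are a short list: $G_{28}=F_4$, $G_{29}$, $G_{30}=H_4$, $G_{31}$, $G_{32}$, plus the monomial and rank-$4$ dihedral-type groups that are transitive); for reducible $W$ one uses Corollary \ref{cor:product-heredindfree} to reduce to lower rank, but transitivity forces irreducibility here. For each such $W$, the restriction $\CA^H$ is one of the explicitly tabulated $3$-arrangements of \cite[\S 6, App.\ C]{orlikterao:arrangements}; I would either exhibit an induction table for $\CA^H$ directly (as in Table \ref{indtable:non-HIF}), or observe that $\CA^H$ is itself a reflection arrangement or a known inductively free arrangement. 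The main obstacle I anticipate is precisely this case analysis: verifying inductive freeness of each $\CA^H$ may require constructing explicit induction tables (a finite but potentially tedious computation, of the kind the authors say they "checked directly" for Example \ref{ex:hindf-not-indf}), and one must be careful that transitivity genuinely holds — for those rank-$4$ groups where $W$ is not transitive on $\CA$, the lemma simply does not apply, so identifying the correct list of $W$ is part of the argument rather than a side remark.
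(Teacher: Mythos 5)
Your reduction of the problem is sound: the reverse implication is trivial, restrictions to subspaces of dimension at most $2$ are covered by Example \ref{ex:1-arr} and Lemma \ref{lem:2-arr}, and transitivity of $W$ on $\CA$ means it suffices to show $\CA^H \in \CIF$ for a single hyperplane $H$. But at exactly that point you stall: you correctly observe that $\exp \CA^H \subseteq \exp \CA$ (from the contrapositive of Corollary \ref{cor:nonindfree}) is not enough to conclude $\CA^H \in \CIF$, and you then propose to establish $\CA^H \in \CIF$ by a case-by-case analysis over the rank-$4$ entries of the Shephard--Todd list, constructing induction tables for each restriction. That analysis is never carried out, so as written the proof is incomplete; and more importantly, it is unnecessary.

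The missing observation is that the hypothesis $\CA \in \CIF$ already hands you the required hyperplane for free. Since $\CA$ is a nonempty inductively free arrangement, Definition \ref{def:indfree}(ii) (applied to the last step by which $\CA$ enters the class $\CIF$) guarantees the existence of some $H_0 \in \CA$ with both $\CA \setminus \{H_0\}$ and $\CA^{H_0}$ in $\CIF$; in particular $\CA^{H_0}$ is inductively free. Transitivity then gives $\CA^H \cong \CA^{H_0} \in \CIF$ for every $H \in \CA$, and the lower-dimensional restrictions are handled as you say. This is the paper's entire argument, and it requires no appeal to the classification, no exponent comparison, and no computation. Your detour through Remark \ref{rem:q-ref} and Corollary \ref{cor:nonindfree} is a red herring here: that machinery is used elsewhere to prove arrangements are \emph{not} inductively free, not to verify that restrictions of inductively free ones are.
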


\begin{proof}
The reverse implication is clear.
So assume that $\CA \in \CIF$. 
By Definition \ref{def:indfree}, there is an $H_0 \in \CA$, so 
that $\CA^{H_0}$ is inductively free. 
But as $W$ is transitive on $\CA$, it follows that 
$\CA^H$ is inductively free for any $H \in \CA$.
Let $V \ne X \in L(\CA)\setminus \CA$.
Then $\CA^X$ is a $d$-arrangement  for $d \le 2$ and 
so $\CA^X$ is inductively free, by
Example \ref{ex:1-arr} and Lemma \ref{lem:2-arr}.
\end{proof}

\section{Proofs of Theorems \ref{main} and \ref{main2}}
\label{sect:proof}

\subsection{Proof of Theorem \ref{main}.}
\label{ss:pfmain}

Thanks to 
Proposition \ref{prop:product-indfree},
the question of inductive freeness reduces 
to the case when $\CA = \CA(W)$ is irreducible.
We prove Theorem \ref{main} by considering the different 
irreducible types of $W$ in turn, \cite{shephardtodd}.

\subsubsection{Cyclic groups}
By Example \ref{ex:1-arr}, 
$\CA(W)$ is inductively free
for $W$ a cyclic group.

\subsubsection{Coxeter groups}
\label{ssect:coxeter}
By \cite[Ex.\ 4.55]{orlikterao:arrangements},
the braid arrangement $\CA(S_\ell)$ is inductively free.
It was shown by Orlik, Solomon and Terao, that 
the reflection arrangements of the Coxeter groups of type $B_\ell$
for $\ell \ge 2$  and of type $D_\ell$
for $\ell \ge 4$ 
are also inductively free, \cite[Ex.\ 2.6]{jamboterao:free}. 
Barakat and Cuntz \cite{cuntz:indfree} 
completed this list by showing 
that every Coxeter arrangement is inductively free.

\subsubsection{Monomial groups}
Note that 
the reflection arrangements for 
$G(r,1,\ell)$, and  $G(r,p,\ell)$, 
for $r \ge 2$, $p \ne r$ and $\ell \ge 2$
are identical.
Thus we only consider $G(r,1,\ell)$ for $r \ge 2$ here.
Also note that $G(2,1,\ell)$ is the Coxeter group 
of type $B_\ell$ which is covered in \S \ref{ssect:coxeter} above, 
so we may assume that $r \ge 3$.

\begin{proposition}
\label{prop:gr1l}
Let $W = G(r,1,\ell)$ for $r\ge 3$ and $\ell \ge 2$.
Then $\CA(W)$ is inductively free.
\end{proposition}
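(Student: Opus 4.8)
The plan is to establish inductive freeness of $\CA(G(r,1,\ell))$ by an explicit induction on $\ell$, building the arrangement up hyperplane by hyperplane and exhibiting a valid induction table in the sense of Remark \ref{rem:indtable}. Recall that $\CA(G(r,1,\ell))$ has defining polynomial $Q = \left(\prod_{i=1}^{\ell} x_i^r\right)\prod_{1\le i<j\le\ell}(x_i^r - x_j^r)$, so its hyperplanes are the coordinate hyperplanes $\ker x_i$ together with the hyperplanes $\ker(x_i - \zeta x_j)$ for $\zeta$ an $r$-th root of unity; the total count is $\ell r + \binom{\ell}{2}r^2/\ldots$ — more precisely $n = \ell + r\binom{\ell}{2}\cdot$ — I will record $|\CA| = \ell r$ no: the Boolean part contributes $\ell$ hyperplanes $\ker x_i$ and the "braid-like" part contributes $r\binom{\ell}{2}$ hyperplanes, while $\exp\CA(G(r,1,\ell)) = \{1, r+1, 2r+1, \ldots, (\ell-1)r+1\}$ by Terao's theorem (the coexponents of $G(r,1,\ell)$).

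First I would set up the base case $\ell = 2$: here $\CA = \CA(G(r,1,2))$ is a $3$-arrangement (or rather a $2$-arrangement — it lives in $V = \BBC^2$), which is inductively free by Lemma \ref{lem:2-arr}, with $\exp\CA = \{1, r+1\}$. For the inductive step, assume $\CA(G(r,1,\ell-1))$ is inductively free with the stated exponents. The key idea is to pass from $\CA(G(r,1,\ell-1))$ (in coordinates $x_1,\ldots,x_{\ell-1}$) to $\CA(G(r,1,\ell))$ in two stages. Stage one: form the product $\CA(G(r,1,\ell-1)) \times \Phi_1$ (adjoining the coordinate $x_\ell$), which is inductively free by Proposition \ref{prop:product-indfree} with exponents $\{0, 1, r+1, \ldots, (\ell-2)r+1\}$. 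Stage two: add, one at a time, the $\ell r$ new hyperplanes involving $x_\ell$, namely $\ker x_\ell$ and the $(\ell-1)r$ hyperplanes $\ker(x_i - \zeta x_\ell)$. At each addition I must check that the restriction $\CA''$ to the newly added hyperplane $H$ is inductively free and that $\exp\CA'' \subseteq \exp\CA'$. The crucial observation making this tractable is that when we restrict the partially-built arrangement to a hyperplane $H$ of the form $\ker(x_i - \zeta x_\ell)$, we are setting $x_\ell = \zeta^{-1}x_i$, and the resulting restriction is essentially (a subarrangement interpolating toward) $\CA(G(r,1,\ell-1))$ in the remaining $\ell-1$ variables — more precisely it is the Boolean-extended braid-type arrangement on $\{x_1,\ldots,x_{\ell-1}\}$ with possibly a few extra identified hyperplanes, which by the inductive hypothesis and Proposition \ref{prop:product-indfree} is inductively free with the right exponents.

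The main obstacle — and the part that needs genuine care rather than invocation of general principles — is choosing the \emph{order} in which the $\ell r$ new hyperplanes are added so that the exponent-containment condition $\exp\CA'' \subseteq \exp\CA'$ holds at every step, and verifying that the intermediate restrictions $\CA''$ are themselves inductively free (not merely free). I expect the right order is: first add $\ker x_\ell$ (raising the exponent $0$ to $1$, restriction is $\CA(G(r,1,\ell-1))$ itself, inductively free by hypothesis), then sweep through the hyperplanes $\ker(x_i - \zeta x_\ell)$ grouped appropriately — perhaps all those for a fixed root $\zeta$ together, or all those for a fixed index $i$ together — so that each restriction is a product of a smaller monomial-group arrangement with empty factors plus a controlled number of extra hyperplanes, whose inductive freeness follows from Lemma \ref{lem:2-arr}, Proposition \ref{prop:product-indfree}, and the induction hypothesis. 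The bookkeeping on exponents is that each such addition increments exactly the top exponent by $1$ until it reaches its target value $(\ell-1)r+1$; this is exactly the pattern one sees in the classical induction tables for $\CA(S_\ell)$ and type $B_\ell$ in \cite[Ex.\ 4.55, Ex.\ 2.6]{orlikterao:arrangements}, \cite{jamboterao:free}, and the argument here is the natural $r$-fold generalization. I would display the resulting induction table schematically (as in Table \ref{indtable1}) and verify the inductive-freeness of the occurring restrictions by the same recursive structure, relegating the explicit root-of-unity computations to a remark that they are routine.
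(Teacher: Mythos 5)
Your overall strategy is exactly the paper's: induct on $\ell$, start from the inductively free product $\CA(G(r,1,\ell-1))\times\Phi_1$ (Proposition \ref{prop:product-indfree}), and add the hyperplanes involving $x_\ell$ one at a time via an induction table. However, the step you explicitly defer --- choosing the order of addition and identifying the intermediate restrictions --- is the entire mathematical content of the proof, and your description of it is both incomplete and slightly off. The resolution is that there is nothing to choose: writing $\CA_{k}(r)$ for $\CA(G(r,1,k))$, for \emph{any} intermediate arrangement $\CB$ with $\CA_{\ell-1}(r)\times\Phi_1\subseteq\CB\subseteq\CA_{\ell}(r)$ and \emph{any} newly added hyperplane $H\in\CB$ (either $\ker x_\ell$ or $\ker(x_i-\zeta^m x_\ell)$), the restriction $\CB^H$ is \emph{exactly} $\CA_{\ell-1}(r)$ --- not ``essentially $\CA(G(r,1,\ell-1))$ with possibly a few extra identified hyperplanes.'' Indeed, every new defining form contains $x_\ell$, so restricting to $H$ amounts to substituting $x_\ell=0$ or $x_\ell=\zeta^{-m}x_i$; this leaves the forms of $Q_{\ell-1}$ untouched (and pairwise non-proportional, since $x_1,\dots,x_{\ell-1}$ still restrict to a basis of $H^*$), while each remaining new form is sent to a nonzero scalar multiple of a form already occurring in $Q_{\ell-1}$: for instance $x_j-\zeta^{m'}x_\ell\mapsto x_j-\zeta^{m'-m}x_i$ for $j\neq i$, and $x_i-\zeta^{m'}x_\ell\mapsto(1-\zeta^{m'-m})x_i$ with $m'\neq m$. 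Hence no new hyperplanes appear, $\CB^H\cong\CA_{\ell-1}(r)$ is inductively free by the induction hypothesis on $\ell$, and the exponent condition $\exp\CB^H=\exp\CA_{\ell-1}(r)\subseteq\{k\}\cup\exp\CA_{\ell-1}(r)=\exp(\CB\setminus\{H\})$ holds automatically at every step, with the last exponent climbing from $0$ up to $(\ell-1)r+1$ as you predicted. In particular every order of addition works, so the ``main obstacle'' you flag is not an obstacle at all once this observation is made --- but without it your argument has a genuine gap, since you never verify that the intermediate restrictions are inductively free with the required exponents.

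Two smaller corrections: the number of hyperplanes added in passing from $\CA_{\ell-1}(r)\times\Phi_1$ to $\CA_\ell(r)$ is $(\ell-1)r+1$, not $\ell r$; and the inductive freeness of the occurring restrictions comes directly from the induction hypothesis (they are literally copies of $\CA_{\ell-1}(r)$), so no further appeal to Lemma \ref{lem:2-arr} or to auxiliary product decompositions is needed inside the inductive step.
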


\begin{proof}
Let $\CA_\ell(r) := \CA(W)$. 
Thanks to \cite[Prop.\ 6.77]{orlikterao:arrangements}, 
$\CA_\ell(r)$ is free with exponents
\[
\exp \CA_\ell(r) = \{1, r+1, 2r+1, \ldots, (\ell-1)r+1\}.
\]
We argue by induction on $\ell$.
For $\ell = 2$, the result follows from Lemma \ref{lem:2-arr}.
So we may assume that $\ell \ge 3$ and that 
$\CA_{\ell}(r)$ is inductively free.
Thus, by Proposition \ref{prop:product-indfree},
we see that the subarrangement 
$\CA_\ell(r) \times \Phi_1$ of $\CA_{\ell+1}(r)$
is inductively free with exponents
$\{\exp \CA_\ell(r), 0\}$.
We aim to show by induction of hyperplanes 
that $\CA_{\ell+1}(r)$ is inductively free,
Remark \ref{rem:indtable}.

Recall that the defining polynomial of $\CA_{\ell}(r)$ is given by
\[
Q_\ell := x_1 x_2 \cdots x_\ell 
\prod_{1\le i < j \le \ell} (x_i^r - x_j^r) = 
x_1 x_2 \cdots x_\ell \prod_{1\le i < j \le \ell} 
\left(\prod_{m=0}^{r-1} (x_i - \zeta^m x_j)\right),
\]
where $\zeta = e^{2\pi i/r}$ 
is a primitive $r$-th root of unity, 
see \cite[Ex.\ 6.29]{orlikterao:arrangements}.

So we start our induction of hyperplanes procedure
with the inductively free subarrangement
$\CA_\ell(r) \times \Phi_1$ of $\CA_{\ell+1}(r)$
with defining polynomial $Q_\ell$. 
We then add the hyperplanes $H_{\ell+1} :=\ker x_{\ell+1}$ and 
subsequently 
$H_{i,\ell+1}(m) := \ker(x_i - \zeta^m x_{\ell+1})$, 
for $1\le i \le \ell$ and $0 \le m < r$.
The crucial observation is that  at each stage
of this process 
the restriction is identical with 
$\CA_\ell(r)$ independent of $i$ and $m$.
The latter is again inductively free by induction.

The additional factors (other than the ones in $Q_\ell$) 
of the defining polynomial $Q_{\ell+1}$ 
of $\CA_{\ell+1}(r)$ are
$P := \{ x_{\ell+1}, x_i - \zeta^m x_{\ell+1}
\mid 1 \le i \le \ell, 0 \le m < r\}$.
The intermediate arrangements described above by adding the various 
hyperplanes $H_{\ell+1}$ and $H_{i,\ell+1}(m)$ to 
$\CA_\ell(r) \times \Phi_1$ have defining polynomial 
which is a product of $Q_\ell$
along with some factors from $P$.
Since the term 
$x_{\ell+1}$ occurs in each element of $P$, 
the restriction of any of these intermediate 
arrangements to 
$H_{\ell+1}$ or any $H_{i,\ell+1}(m)$
is achieved by substitution of $x_{\ell+1}$. 
The terms in $Q_\ell$ are 
not affected by these substitutions, because $x_{\ell+1}$ 
does not occur in $Q_\ell$.
We distinguish two different types of restrictions. 
The first one is the restriction of 
$\CA_\ell(r) \times \Phi_1$ to $H_{\ell+1}$, i.e.\ here we
replace $x_{\ell+1}$ by $0$. 
It follows that 
$(\CA_\ell(r) \times \Phi_1)^{H_{\ell+1}} \cong \CA_\ell(r)$.
A restriction of an intermediate subarrangement to 
any $H_{j,\ell+1}(m)$ results in the substitution 
$x_{\ell + 1} = \zeta^{-m} x_j$.
Therefore,  we get
$\{ \zeta^{-m}x_j, x_i - \zeta^{m'} x_j \mid 
1 \le i \le \ell, 0 \le m' < r\}$ as defining terms
for the restriction. But up to a scalar (and $0$), 
each such already occurs in $Q_\ell$. 
The zero does occur here, since
we restrict to the according coordinate hyperplane.
As as result the restriction to 
$H_{j,\ell+1}(m)$ of the intermediate arrangement is
again isomorphic to $\CA_{\ell}(r)$ which is 
inductively free by hypothesis.

We present the resulting induction
table for $\CA_{\ell+1}(r)$
(starting with $\CA_\ell(r) \times \Phi_1$)
in Table \ref{indtable1}.

\begin{table}[ht!b]\small
\renewcommand{\arraystretch}{1.5}
\begin{tabular}{lll}
  \hline
  $\exp \CA'$ & $\alpha_H$ & $\exp \CA''$\\
  \hline
  \hline
$\exp \CA_\ell,0$ & $x_{\ell+1}$ & $\exp \CA_\ell$ \\
$\exp \CA_\ell,1$ & $ x_1 - x_{\ell+1}$ & $\exp \CA_\ell$ \\
$\exp \CA_\ell,2$ & $x_1 - \zeta x_{\ell+1}$ & $\exp \CA_\ell$ \\
$\vdots$ & $\vdots$ &$\vdots$ \\
$\exp \CA_\ell,r$ & $x_1 - \zeta^{r-1} x_{\ell+1}$ & $\exp \CA_\ell$ \\
$\exp \CA_\ell,r+1$ & $x_2 - x_{\ell+1}$ & $\exp \CA_\ell$ \\
$\vdots$ & $\vdots$ &$\vdots$ \\
$\exp \CA_\ell,2r$ & $x_2 - \zeta^{r-1} x_{\ell+1}$ & $\exp \CA_\ell$ \\
$\vdots$ & $\vdots$ &$\vdots$ \\
$\exp \CA_\ell,(\ell-1)r$  & $x_{\ell-1} - \zeta^{r-1}x_{\ell+1}${\hglue 10pt}  & $\exp \CA_\ell$ \\
$\exp \CA_\ell,(\ell-1)r+1$ {\hglue 10pt}  & $x_{\ell} - x_{\ell+1}$ & $\exp \CA_\ell$  \\
$\vdots$ & $\vdots$ &$\vdots$ \\
$\exp \CA_\ell,\ell r $  & $x_{\ell} - \zeta^{r-1}x_{\ell+1}$ & $\exp \CA_\ell$ \\
$\exp \CA_\ell,\ell r+1$\\ 
\hline
\end{tabular}
\medskip
\caption{Induction Table for $\CA_{\ell+1}(r) = \CA(G(r,1,\ell+1))$.} \label{indtable1} 
\end{table}

The result thus follows from Definition \ref{def:indfree}
and the data in Table \ref{indtable1}.
\end{proof}

Now let $W = G(r,r,\ell)$ for $r, \ell \ge 2$.
If $r =2$, then $W$ is the Coxeter group of 
type $D_\ell$ and if $\ell =2$, then 
$W$ is a dihedral group.
So, both cases are covered in Section \ref{ssect:coxeter} above.
Thus we may assume that $r, \ell \ge 3$.

\begin{proposition}
\label{prop:grrl}
Let $W = G(r,r,\ell)$ for $r, \ell \ge 3$.
Then $\CA = \CA(W)$ is not inductively free.
\end{proposition}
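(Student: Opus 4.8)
The plan is to apply Corollary~\ref{cor:nonindfree}: I will produce a hyperplane $H_0 \in \CA$ for which $\exp \CA^{H_0} \not\subseteq \exp \CA$, which immediately forces $\CA$ not to be inductively free. Since $W = G(r,r,\ell)$ acts transitively on $\CA$ for $\ell \ge 3$ --- already the subgroup generated by the permutation matrices and the diagonal matrices $\mathrm{diag}(\zeta^{a_1},\dots,\zeta^{a_\ell})$ with $\sum a_i \equiv 0 \bmod r$ carries any reflecting hyperplane $x_i = \zeta^m x_j$ to any other --- the choice of $H_0$ is irrelevant, and I take $H_0 = \ker(x_1 - x_2)$, so that $\CA'' = \CA^{H_0}$.

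The first step is to record the two exponent multisets. By \cite[\S 6]{orlikterao:arrangements}, $\CA = \CA(G(r,r,\ell))$ is free with coexponents
\[
\exp \CA = \{1,\, r+1,\, 2r+1,\, \dots,\, (\ell-2)r+1,\, (\ell-1)(r-1)\},
\]
and by Remark~\ref{rem:q-ref} the restriction $\CA''$ is free as well. For $\exp \CA''$ I would argue directly. Substituting $x_2 = x_1$ in $Q(\CA) = \prod_{i<j}(x_i^r - x_j^r)$, the factor $x_1 - x_2$ vanishes, the remaining $r-1$ factors of $x_1^r - x_2^r$ collapse to the single form $x_1$, the factors from $x_1^r - x_j^r$ and $x_2^r - x_j^r$ ($j \ge 3$) coincide, and all other factors are untouched; hence, relabelling $(x_1, x_3, \dots, x_\ell)$ as $(y_1,\dots,y_{\ell-1})$, the arrangement $\CA''$ is the one in $\BBC^{\ell-1}$ with reduced defining polynomial
\[
y_1 \prod_{k=2}^{\ell-1}(y_1^r - y_k^r) \prod_{2 \le i < k \le \ell-1}(y_i^r - y_k^r),
\]
that is, $\CA(G(r,1,\ell-1))$ with the $\ell-2$ coordinate hyperplanes $y_2 = 0,\dots,y_{\ell-1} = 0$ deleted. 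I would then delete these coordinate hyperplanes one at a time and apply Theorem~\ref{thm:add-del} at each step: the restriction of $\CA(G(r,1,\ell-1))$ (with any set of coordinate hyperplanes already removed) to the coordinate hyperplane being deleted is the \emph{full} $\CA(G(r,1,\ell-2))$, whose exponents $\{1, r+1,\dots,(\ell-3)r+1\}$ are given by Proposition~\ref{prop:gr1l}, so each deletion lowers the top exponent by $1$ and
\[
\exp \CA'' = \{1,\, r+1,\, 2r+1,\, \dots,\, (\ell-3)r+1,\, (\ell-2)r - \ell + 3\}
\]
as a multiset. For $\ell = 3$ this reads $\{1, r\}$, consistent with Example~\ref{ex:g333}; one can also extract the formula from the tables in \cite[\S 6, App.\ C]{orlikterao:arrangements}.

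The second step is the comparison. The first $\ell-2$ entries $1, r+1, \dots, (\ell-3)r+1$ of $\exp \CA''$ are pairwise distinct and each occurs in $\exp \CA$, so $\exp \CA'' \subseteq \exp \CA$ would require the last entry $M := (\ell-2)r - \ell + 3$ either to equal the unused entry $(\ell-2)r+1$ of $\exp\CA$, or to equal the extra coexponent $(\ell-1)(r-1)$, or to equal one of $1, r+1, \dots, (\ell-3)r+1$ with that value occurring twice in $\exp\CA$. The first alternative forces $\ell = 2$; the second (and any equality $M = (\ell-1)(r-1)$) forces $r = 2$; and in the third the only value that can repeat in $\exp\CA$ is $(\ell-1)(r-1)$, again forcing $r = 2$. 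All of these are excluded by $r, \ell \ge 3$, so $\exp \CA'' \not\subseteq \exp\CA$, and Corollary~\ref{cor:nonindfree} yields that $\CA$ is not inductively free.

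I expect the main obstacle to be the first step --- identifying $\CA''$ and nailing down $\exp \CA''$ (equivalently, reading off the correct row of \cite[App.\ C]{orlikterao:arrangements}) --- together with the care needed in the second step to exclude the accidental exponent coincidences that do occur for special parameters (for instance, when $r \mid \ell - 2$ the entry $M$ genuinely equals an earlier exponent value, and only the multiplicity count rules out the containment). The appeal to Corollary~\ref{cor:nonindfree} itself is immediate.
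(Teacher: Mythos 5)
Your proof is correct and follows the same route as the paper: both reduce to showing $\exp \CA'' \not\subseteq \exp \CA$ for a single hyperplane and then invoke Corollary~\ref{cor:nonindfree}. The only differences are that you derive $\exp \CA'' = \{1, r+1, \ldots, (\ell-3)r+1, (\ell-2)(r-1)+1\}$ (your $M=(\ell-2)r-\ell+3$ is the same number) by hand via deletion--restriction, where the paper simply cites \cite[Prop.\ 6.82, Prop.\ 6.85]{orlikterao:arrangements}, and that you carry out the multiset comparison explicitly --- including the case $r \mid \ell-2$, where $M$ coincides with an earlier exponent and the non-containment, which the paper merely asserts, really does rest on the multiplicity count.
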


\begin{proof}
By \cite[Cor.\ 6.86]{orlikterao:arrangements}, we have
\[
\exp \CA =  \{1, r+1, \ldots, (\ell-2)r+1, (\ell-1)(r-1)\},
\] 
and thanks to \cite[Prop.\ 6.82, Prop.\ 6.85]{orlikterao:arrangements}, we get 
\[
\exp \CA'' = \{1, r+1, \ldots, (\ell-3)r+1, (\ell-2)(r-1)+1\}.
\]
Since $r, \ell \ge 3$, $\exp \CA'' \not\subseteq \exp \CA$, and so 
$\CA$ is not inductively free, 
by  Corollary \ref{cor:nonindfree}.
\end{proof}

\subsubsection{Exceptional  groups (non-real)}
It follows from Lemma \ref{lem:2-arr} that 
the reflection arrangement of 
each of the rank $2$ groups 
of exceptional type is inductively free. 
The fact that $\CA(G_{25})$ is inductively free was checked in 
\cite[Ex.\ 6.92]{orlikterao:arrangements}.

{}From the exponents listed in the tables in   
\cite[App.\ C]{orlikterao:arrangements}
we infer that if $W$ is one of 
$G_{24}, G_{27}, G_{29}, G_{33}$ or $G_{34}$, then 
$\exp \CA'' \not\subseteq \exp \CA$,
for $\CA = \CA(W)$.
Note that by \cite{hogeroehrle:free},  
$\CA''$ is known to be free also for $G_{33}$ and $G_{34}$
with exponents given as in \cite[Tables C.14, C.17]{orlikterao:arrangements}.
Thus
$\CA$ is not inductively free in each of these instances, by Corollary \ref{cor:nonindfree}.

We investigate directly whether $\CA = \CA(W)$ is 
inductively free
in the three remaining exceptional cases $G_{26}, G_{31}$ and $G_{32}$.
It turns out that while $\CA(G_{26})$ and $\CA(G_{32})$ are inductively free
(here we present the induction tables),
in contrast, $\CA(G_{31})$ is not. 

\begin{lemma}
\label{lem:g26}
Let $W = G_{26}$. Then $\CA(W)$ is inductively free.
\end{lemma}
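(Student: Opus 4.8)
The plan is to exhibit $\CA(G_{26})$ as an inductively free arrangement by the technique of \emph{induction of hyperplanes} described in Remark~\ref{rem:indtable}, i.e.\ by producing an explicit induction table. Concretely, I would fix a coordinate system on $V = \BBC^3$, write down the defining polynomial $Q(\CA(G_{26}))$ as a product of the $21$ linear forms associated to the reflecting hyperplanes of $G_{26}$ (this group has $21$ reflecting hyperplanes, with $\exp \CA(G_{26}) = \{1, 7, 13\}$ by \cite[App.~C]{orlikterao:arrangements}), and then choose a total order $H_1, \dots, H_{21}$ on these hyperplanes so that at every stage the intermediate arrangement $\CA_i = \{H_1, \dots, H_i\}$ and each restriction $\CA_i^{H_i}$ satisfy condition (ii) of Definition~\ref{def:indfree}.

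First I would start the induction from a convenient inductively free subarrangement: either $\Phi_3$ itself, or better, a well-chosen reducible subarrangement (for example a product $\CB \times \Phi_1$ with $\CB$ a $2$-arrangement, which is inductively free by Lemma~\ref{lem:2-arr} and Proposition~\ref{prop:product-indfree}), to shorten the table. Then I would add the remaining hyperplanes one at a time. Since $\ell = 3$, every restriction $\CA_i^{H_i}$ is a $2$-arrangement, hence automatically inductively free by Lemma~\ref{lem:2-arr}; so the only thing that actually needs to be verified at each row is the numerical containment $\exp \CA_i^{H_i} \subseteq \exp \CA_{i-1}$, together with the fact that the exponents behave as the Addition–Deletion Theorem~\ref{thm:add-del} predicts (which is what lets one read off $\exp \CA_i$ from $\exp \CA_{i-1}$ and $\exp \CA_i^{H_i}$). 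For a $2$-arrangement $\CA_i^{H_i}$ with $k$ hyperplanes the exponents are $\{1, k-1\}$ (or $\{0,0\}$ if $k=0$, $\{0,1\}$ if $k=1$), so at each step this reduces to counting how many hyperplanes of $\CA_i$ meet $H_i$ in distinct points of $H_i$. The whole verification is therefore a finite combinatorial check on the intersection pattern of the $21$ hyperplanes, and the outcome is recorded in Table~\ref{indtable:g26}; the conclusion then follows immediately from Definition~\ref{def:indfree}.

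The main obstacle is purely one of bookkeeping rather than of ideas: one has to \emph{find} an admissible order on the $21$ hyperplanes. Not every order works — at each step the new hyperplane must restrict to an arrangement whose (single nontrivial) exponent is at most the largest current exponent, i.e.\ one must never create a restriction that is ``too rich'' too early, which is precisely the phenomenon that makes $G(3,3,3)$, $G(3,3,\ell)$, and $G_{31}$ fail to be inductively free (cf.\ Example~\ref{ex:g333} and Corollary~\ref{cor:nonindfree}). Concretely, since the target exponents are $\{1,7,13\}$ and the intermediate largest exponent grows from $0$ up to $13$, one must order the hyperplanes so that the restriction $\CA_i^{H_i}$ never has more hyperplanes than one past the current maximum exponent allows; finding such an order is a search problem, best carried out with computer assistance (as in \cite{cuntz:indfree}), but once found it is routine to certify. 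I would therefore simply display the resulting induction table for $\CA(G_{26})$, note that every restriction occurring in it is a $2$-arrangement and hence inductively free by Lemma~\ref{lem:2-arr}, and invoke Definition~\ref{def:indfree}(ii) repeatedly to conclude that $\CA(G_{26}) \in \CIF$.
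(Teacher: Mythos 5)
Your proposal is correct and follows essentially the same route as the paper: exhibit an explicit induction table for the $21$ hyperplanes of $\CA(G_{26})$ (with $\exp\CA = \{1,7,13\}$), observe that every restriction is a $2$-arrangement and hence inductively free by Lemma \ref{lem:2-arr}, and conclude via Definition \ref{def:indfree} and Theorem \ref{thm:add-del}. The paper's table in fact starts from $\Phi_3$ rather than a product subarrangement, but that is an inessential difference.
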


\begin{proof}
Let $\zeta = e^{2\pi i/3}$ be a primitive $3$rd root of unity. 
We label the indeterminates 
of $S$ by $a, b$ and $c$.
The induction table for $\CA = \CA(W)$ is given in Table \ref{indtable:g26}.
Since $\CA$ is a $3$-arrangement, each restriction 
$\CA''$ is inductively free, by Lemma \ref{lem:2-arr}.
The result follows from Theorem \ref{thm:add-del}.
\end{proof}

\begin{table}[h]
 \extrarowheight3pt
\begin{tabular}[t]{lll}  \hline
  $\exp \CA'$ & $\alpha_H$ & $\exp \CA''$\\
\hline\hline
$0 , 0 , 0$ & $b-c$ & $0,0 $ \\
$0 , 0 , 1$ & $c$ & $0,1$ \\
$0 , 1 , 1$ & $a+b+c$ & $1,1$ \\
$1 , 1 , 1$ & $b$ & $1,1$ \\
$1 , 1 , 2$ & $b -\zeta c$ & $1,1$ \\
$1 , 1 , 3$ & $a+b+ \zeta c$ & $1,3$ \\
$1 , 2 , 3$ & $a+b + \zeta^2 c$ & $1,3$ \\
$1 , 3 , 3$ & $b -\zeta^2 c$ & $1,3$ \\
$1 , 3 , 4$ & $a -\zeta b$ & $1,4$ \\
$1 , 4 , 4$ & $a -\zeta^2 b$ & $1,4$ \\
$1 , 4 , 5$ & $a -\zeta^2 c$ & $1,5$ \\
\hline 
\end{tabular}
  \qquad
\begin{tabular}[t]{lll}\hline
  $\exp \CA'$ & $\alpha_H$ & $\exp \CA''$\\ 
    \hline\hline 
$1 , 5 , 5$ & $a -\zeta c$ & $1,5$ \\
$1 , 5 , 6$ & $a + \zeta^2 b+c$ & $1,6$ \\
$1 , 6 , 6$ & $a+ \zeta b+c$ & $1,6$ \\
$1 , 6 , 7$ & $a +\zeta^2 b+ \zeta c$ & $1,7$ \\
$1 , 7 , 7$ & $a+ \zeta b+ \zeta c$ & $1,7$ \\
$1 , 7 , 8$ & $a + \zeta^2 b + \zeta^2 c$ & $1,7$ \\
$1 , 7 , 9$ & $a+ \zeta b +\zeta^2 c$ & $1,7$ \\
$1 , 7 , 10$ & $a-c$ & $1,7$ \\
$1 , 7 , 11$ & $a$ & $1,7$ \\
$1 , 7 , 12$ & $a-b$ & $1,7$ \\
$1 , 7 , 13$  \\
\hline 
\end{tabular}
\bigskip
\caption{Induction Table for $G_{26}$.} 
\label{indtable:g26} 
\end{table}

\begin{lemma}
\label{lem:g32}
Let $W = G_{32}$. Then $\CA(W)$ is inductively free.
\end{lemma}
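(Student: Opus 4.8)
The plan is to proceed exactly as in the proof of Lemma \ref{lem:g26}, namely by exhibiting an explicit induction of hyperplanes (Remark \ref{rem:indtable}) that realizes $\CA = \CA(G_{32})$ as an inductively free arrangement. Recall that $G_{32}$ acts on $V = \BBC^4$ and that its reflection arrangement has $40$ hyperplanes, with $\exp \CA = \{1, 13, 19, 25\}$ by \cite[App.\ C]{orlikterao:arrangements}; by Terao's theorem (Remark \ref{rem:q-ref}) $\CA$ is free. First I would fix coordinate functions, say labelled $a, b, c, d$ in $S$, and record the $40$ defining linear forms $\alpha_H$ explicitly (for instance in the coordinates coming from the imprimitive-looking description, or directly from the list of root vectors for $G_{32}$). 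I would then choose a total order $H_1, \dots, H_{40}$ on these hyperplanes so that, writing $\CA_i := \{H_1, \dots, H_i\}$, each triple $(\CA_i, \CA_{i-1}, \CA_i^{H_i})$ satisfies Definition \ref{def:indfree}(ii): concretely, starting from $\Phi_4$ (or from a suitable inductively free $\CA(G(r,1,3)) \times \Phi_1$ type subarrangement, handled by Propositions \ref{prop:product-indfree} and \ref{prop:gr1l}), each added hyperplane must produce $\exp \CA_i = \exp \CA_{i-1}$ with one coordinate raised by $1$, and $\exp \CA_i^{H_i} = \exp \CA_{i-1}$.

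The key technical point is that, unlike the rank-$3$ case $G_{26}$ where every restriction $\CA''$ is a $2$-arrangement and hence automatically inductively free by Lemma \ref{lem:2-arr}, here the restrictions $\CA_i^{H_i}$ are $3$-arrangements, so their inductive freeness is \emph{not} automatic. Thus for each row of the induction table I would need to verify separately that the restriction $\CA_i^{H_i}$ (a $3$-arrangement with the exponents claimed in the table) is itself inductively free — which, by Lemma \ref{lem:3-arr}, is the same as checking it is hereditarily inductively free, but in practice means running a secondary induction of hyperplanes on each such restriction (or recognizing it, up to the natural identifications, as a previously-treated inductively free $3$-arrangement such as a restriction of $\CA(G_{26})$, $\CA(G_{25})$, or a monomial-group arrangement). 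Once the main induction table and all these auxiliary verifications are in place, the conclusion follows immediately from Definition \ref{def:indfree}, exactly as in Lemmas \ref{lem:g26} and \ref{prop:gr1l}.

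The main obstacle is therefore computational: finding an order on the $40$ hyperplanes for which the addition-deletion bookkeeping works out at every step, and simultaneously certifying that each of the resulting $3$-dimensional restrictions is inductively free. This is precisely the sort of search that was carried out with computer assistance (e.g.\ \GAP, \Singular, or the methods of \cite{cuntz:indfree}) for the Coxeter cases; I would organize the output as an induction table (analogous to Table \ref{indtable:g26}), display it as Table \ref{indtableg32}, and note — as in Example \ref{ex:hindf-not-indf} — that the inductive freeness of each occurring restriction $\CA''$ with the indicated exponents has been checked directly, omitting the routine details. No genuinely new idea beyond Lemma \ref{lem:3-arr} and the induction-table technique is needed; the content is the explicit certificate.
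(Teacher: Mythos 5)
Your proposal matches the paper's proof in essence: the paper exhibits exactly such an induction table (Table \ref{indtableg32}), verifies separately that each $3$-dimensional restriction $\CA''$ is inductively free (recognizing those with $\exp\CA''=\{1,7,13\}$ as copies of $\CA(G_{26})$, handled in Lemma \ref{lem:g26}), and concludes via Definition \ref{def:indfree}. One small correction: the exponents of $\CA(G_{32})$ are the coexponents $\{1,7,13,19\}$ (summing to $|\CA|=40$), not $\{1,13,19,25\}$; this does not affect your argument's structure.
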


\begin{proof}
Let $\zeta = e^{2\pi i/3}$ be a primitive $3$rd root of unity. 
We label the indeterminates 
of $S$ by $a, b, c$ and $d$.
We present the induction table for $\CA = \CA(W)$ in 
Table \ref{indtableg32} below.
Here at each step the restriction $\CA''$ is a 
$3$-arrangement. We checked in each case
that $\CA''$ is indeed itself again inductively free.
One easily checks from the data given that at each step
$\exp \CA'' \subset \exp \CA'$.

\begin{table}[h] 
\extrarowheight3pt
\begin{tabular}[t]{lll}  \hline
  $\exp \CA'$ & $\alpha_H$ & $\exp \CA''$\\
\hline\hline
$0, 0, 0, 0$ 	&    $c$						& $0,0,0$ \\ 
$0, 0, 0, 1$     &    $a+b+c$          			& $0, 0,  1$ \\
$0, 0, 1, 1$     &    $b$                				&     $0, 1,  1$ \\
$0, 1, 1, 1$     &    $a-b-d$           				& $1, 1,  1$ \\
$1, 1, 1, 1$     &     $a+b+\zeta c$ 			& $1, 1,  1$ \\
$1, 1, 1, 2$     &     $a+b+\zeta^2 c$   		&   $1, 1,  1$ \\
$1, 1, 1, 3$     &     $a+\zeta^2 b+c$        		& $1, 1,  3$\\ 
$1, 1, 2, 3$     &  $a+\zeta b  +c$              		& $ 1, 1,  3$ \\     
$1, 1, 3, 3$     & $a -\zeta  b  -d$              	& $1, 3,  3$ \\ 
$1, 2, 3, 3$     & $a+\zeta b  +\zeta^2 c$   	&   $1, 2,  3$ \\
$1, 2, 3, 4$     & $a+\zeta b  +\zeta c$   		&     $1, 2,  4$ \\ 
$1, 2, 4, 4$     & $a -\zeta^2 b  -d$           	&  $1, 4,  4$\\
$1, 3, 4, 4$     & $a -\zeta  c + \zeta^2 d$ 	& $1, 4,  4$ \\ 
$1, 4, 4, 4$     & $a+ \zeta^2 b  + \zeta c$   	&   $ 1, 4,  4$ \\ 
$1, 4, 4, 5$     & $a - \zeta^2 c + \zeta^2 d$ 	& $ 1, 4,  5$ \\ 
$1, 4, 5, 5$     & $b - c -\zeta^2 d$ 			& $1, 5,  5$ \\ 
$1, 5, 5, 5$     & $a+ \zeta^2 b  + \zeta^2 c$   &   $ 1, 5,  5$ \\ 
$1, 5, 5, 6$     & $a-c+ \zeta^2 d$ 			& $ 1, 5,  6$ \\
$1, 5, 6, 6$     & $b - \zeta^2 c - \zeta^2 d$ 	& $   1, 6,  6$ \\
$1, 6, 6, 6$     &  $   b -\zeta c - \zeta^2 d$	& $    1, 6,  6$ \\
\hline 
\end{tabular}
  \qquad
\begin{tabular}[t]{lll}\hline
  $\exp \CA'$ & $\alpha_H$ & $\exp \CA''$\\ 
    \hline\hline 
$1, 6, 6, 7$ 	& $a$ 						& $ 1, 6,  7$ \\ 
$1, 6, 7, 7$     & $a - \zeta^2 c +\zeta d$ 		& $     1, 7,  7$ \\ 
$1, 7, 7, 7$     &  $a-c+\zeta d$ 				& $     1, 7,  7$ \\
$1, 7, 7, 8$     &  $b - \zeta c  -\zeta d$ 		& $     1, 7,  8$ \\ 
$1, 7, 8, 8$     & $a - \zeta c +\zeta d$ 		& $      1, 7,  8$ \\
$1, 7, 8, 9$     & $b - \zeta^2 c  - \zeta d$ 	& $    1, 7,  9$ \\ 
$1, 7, 9, 9$     & $b-c - \zeta d$ 				& $ 1, 7,  9 $ \\
$1, 7, 9, 10$   & $a - \zeta b  - \zeta d$ 	& $     1, 7,  9$ \\
$1, 7, 9, 11$    & $a - \zeta^2 b -\zeta^2 d$ & $   1, 7,  9$ \\
$1, 7, 9, 12$    & $b-c-d$              			& $   1, 7, 12$ \\
$1, 7, 10, 12$   & $b - \zeta c -d$ 			& $ 1, 7, 12$ \\
$1, 7, 11, 12$   & $b -\zeta^2 c -d$           	& $ 1, 7, 12$ \\
$1, 7, 12, 12$   & $a - \zeta^2 b  - \zeta d$ 	& $    1, 7, 12$ \\
$1, 7, 12, 13$   & $a-b - \zeta^2 d$ 			& $    1, 7, 13$ \\
$1, 7, 13, 13$   & $a-b - \zeta d$ 			& $     1, 7, 13 $ \\
$1, 7, 13, 14$   & $a - \zeta b  -\zeta^2 d$ 	& $    1, 7, 13$ \\
$1, 7, 13, 15$    & $a - \zeta c +d$        		& $      1, 7, 13$ \\
$1, 7, 13, 16$   & $ d$                       			& $    1, 7, 13$ \\ 
$1, 7, 13, 17$   & $a -\zeta^2 c +d $           	& $ 1, 7, 13$\\ 
$1, 7, 13, 18$   & $a-c+d$            			& $   1, 7, 13$ \\ 
$1, 7, 13, 19$   \\ 
\hline
\end{tabular}
\bigskip
\caption{Induction Table for $G_{32}$.} \label{indtableg32} 
\end{table}

In case $\exp \CA'' = \{1,7,13\}$, the restriction 
$\CA''$ is always isomorphic to the reflection 
arrangement $\CA(G_{26})$ of $G_{26}$, which is inductively free, by
Lemma \ref{lem:g26}.

Thus, by Definition \ref{def:indfree}, Theorem \ref{thm:add-del}
and the data from Table \ref{indtableg32},
$\CA$ is inductively free.
\end{proof}

\begin{lemma}
\label{lem:g31}
Let $W = G_{31}$. Then $\CA(W)$ is not inductively free.
\end{lemma}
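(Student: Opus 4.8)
The plan is to show that $\CA = \CA(G_{31})$ fails to be inductively free
by exhibiting an obstruction that no choice of hyperplane $H_0 \in \CA$ can
circumvent. Unlike the cases of $G_{24}, G_{27}, G_{29}, G_{33}, G_{34}$,
here Corollary \ref{cor:nonindfree} is not directly available: one checks
from \cite[App.\ C]{orlikterao:arrangements} that
$\exp \CA^H \subseteq \exp \CA$ for $H \in \CA$ (indeed
$\exp \CA = \{1, 13, 17, 29\}$ while
$\exp \CA^H = \{1, 13, 17\}$), so the first-order deletion does not
immediately break. The obstruction must therefore be found one level
deeper, among the intermediate arrangements $\CA_i$ in any putative
induction chain, or among the restrictions $\CA_i^{H_i}$.

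The approach I would take is the following. Since $W = G_{31}$ acts
transitively on $\CA$, in any induction chain witnessing $\CA \in \CIF$ we
may assume (after relabelling) that the last hyperplane added is an
arbitrary $H_0 \in \CA$, and hence that $\CA' = \CA \setminus \{H_0\}$ is
itself inductively free with
$\exp \CA' = \{1, 13, 17, 28\}$ and that $\CA'' = \CA^{H_0}$ is
inductively free with $\exp \CA'' = \{1, 13, 17\}$. I would first pin down,
using the geometry of the $G_{31}$ root system (the $60$ reflecting
hyperplanes, with $W$ transitive on them), the isomorphism type of the
$3$-arrangement $\CA'' = \CA^{H_0}$; by transitivity this type is
independent of $H_0$. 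Then I would argue that $\CA' = \CA \setminus \{H_0\}$
cannot be inductively free: the point is that in \emph{its} induction chain
one is again forced, at some stage, to pass to a deletion or restriction
whose exponents violate the containment required by
Definition \ref{def:indfree}(ii). Concretely, I would look for a hyperplane
$H_1$ such that $(\CA')^{H_1}$ is a free $3$- or $2$-arrangement whose
exponents are \emph{not} contained in those of $\CA' \setminus \{H_1\}$ —
in effect applying Corollary \ref{cor:nonindfree}-type reasoning one rung
down. Because $G_{31}$ is not itself a reflection arrangement on that
subarrangement, I would instead verify the relevant freeness and exponent
data by direct computation (this is exactly the kind of check the authors
perform for $G_{26}$ and $G_{32}$, but here with a \emph{negative}
outcome), most plausibly organised as: enumerate the $W$-orbits of
hyperplanes in $\CA'$, and for one representative $H_1$ from the orbit
that obstructs freeness, compute $\exp (\CA')' $ and $\exp (\CA')''$ and
observe the failure of containment, then invoke Corollary \ref{cor:q} to
conclude $(\CA')'$ is not free, hence $\CA'$ is not even free, a fortiori
not inductively free.

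The hard part will be the bookkeeping: one must show the obstruction is
unavoidable, i.e.\ that \emph{no} induction chain for $\CA$ exists, not
merely that one natural attempt fails. The clean way to do this is to
reduce, via transitivity of $W$ on $\CA$, to the single statement
``$\CA \setminus \{H_0\}$ is not free'' (or not inductively free) for one —
hence every — $H_0$, and then to establish that non-freeness by a finite
computation analogous to Example \ref{ex:g333}: identify a subarrangement
$\CB$ of $\CA \setminus \{H_0\}$, or a restriction thereof, that is forced
to appear in any induction table and that violates the exponent
containment, so that Corollary \ref{cor:nonindfree} (applied to an
appropriate free arrangement and its deletion) or Corollary \ref{cor:q}
kills it. I expect the actual verification to be carried out on a computer
(as with the induction tables for $G_{26}$ and $G_{32}$), with the write-up
recording only the critical exponent multisets $\exp \CA$, $\exp \CA'$,
$\exp \CA''$, and the one further descent where
$\exp \not\subseteq \exp$ fails. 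The main obstacle is thus conceptual
rather than computational: arranging the argument so that a single failed
descent rules out the entire class of induction chains, which transitivity
of $W$ on $\CA$ makes possible.
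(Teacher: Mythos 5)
There is a genuine gap, and it sits exactly where you flag ``the hard part.'' Your proposed reduction --- ``reduce, via transitivity of $W$ on $\CA$, to the single statement `$\CA \setminus \{H_0\}$ is not free' '' --- cannot work, because $\CA' = \CA \setminus \{H_0\}$ \emph{is} free: $\CA$ and $\CA'' = \CA^{H_0}$ are free with $\exp \CA'' = \{1,13,17\} \subseteq \exp \CA = \{1,13,17,29\}$, so Theorem \ref{thm:add-del} forces $\CA'$ to be free with $\exp \CA' = \{1,13,17,28\}$. Relatedly, the inference ``$(\CA')'$ is not free, hence $\CA'$ is not even free'' is a non sequitur: non-freeness of a deletion never implies non-freeness of the ambient arrangement (Corollary \ref{cor:q} runs only in the direction you first state it). And even the weaker goal of showing $\CA'$ is not \emph{inductively} free cannot be settled by exhibiting one bad hyperplane $H_1$: Definition \ref{def:indfree}(ii) only requires the \emph{existence} of a good hyperplane, and once you pass to $\CA'$ you have lost transitivity --- only the stabilizer of $H_0$ acts, with several orbits on $\CA'$ --- so the ``one bad, hence all bad'' argument that powers Corollary \ref{cor:nonindfree} is no longer available one rung down.

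The actual obstruction lies some twenty rungs deeper, and the real content of the proof is controlling the combinatorial explosion of possible induction chains down to that depth. The paper reads the putative induction table backwards from $\CA$: since every restriction of every subarrangement encountered can have at most $31$ hyperplanes, the exponents of the restrictions in the last $13$ steps are forced to be $\{1,13,17\}$, which pins the chain down to a free subarrangement with $47$ hyperplanes; one then enumerates \emph{all} such free subarrangements (roughly $10^5$ of them), continues the forced descent to $40$ hyperplanes, and finds that every admissible chain must pass through one of only two subarrangements $\CB$ with $\exp \CB = \{1,9,13,17\}$. These are killed by your kind of argument, but only at that level: every restriction $\CB''$ has $21$ hyperplanes, and since $21$ is not a sum of three elements of $\{1,9,13,17\}$, Theorem \ref{thm:add-del} shows no deletion $\CB'$ is free, so the chain cannot continue. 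Your write-up correctly identifies the shape of the needed obstruction but places it at depth one, where it does not exist, and offers no mechanism (the cardinality bound on restrictions, the exhaustive enumeration of intermediate free subarrangements) to force \emph{every} chain through a common bottleneck where the obstruction actually occurs.
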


\begin{proof}
Let $\CA = \CA(W)$ be the reflection arrangement of $W$.
Then $\exp \CA = \{1, 13, 17, 29\}$,
\cite[Table C.12]{orlikterao:arrangements}.
So $|\CA| = 60$.
Suppose that $\CA$ is inductively free so it has an induction table. 
Since all hyperplanes in $\CA$ are conjugate and since 
$\CA''$ is free with exponents given by $\exp \CA '' = \{1, 13, 17\}$,
we have $|\CA''| = 31$. Clearly, this is the maximal cardinality of 
any restriction in the induction table of $\CA$. 

We study the induction table of $\CA$ from its end
rather than its beginning.
By carefully analyzing the possibilities of the 
occurring free subarrangements we
are able to deduce a contradiction
to our assumption that $\CA$ is inductively free.

First, since $W$ is transitive on $\CA$, we may 
assume that the induction table ends with the addition of a fixed
hyperplane.
Going back in the induction table, we
can remove $13$ hyperplanes 
from $\CA$ where at each stage the restriction  
is inductively free with the same set of exponents $\{1, 13, 17\}$.
The reason for that stems from the fact that
the exponents of the restriction in each step have to be a subset of 
$\{1, 13, 17, b\}$, where $b \ge 17$. 
But we have already seen that the maximal cardinality of
such a restriction is $31$.
 
This results in a free subarrangement of $\CA$ with $47$ hyperplanes.

We then construct 
all free subarrangements $\CC$ of $\CA$ with 
$47$ hyperplanes 
(there are roughly $100.000$ of them).
Then we check that any restriction  $\CC''$ to a hyperplane of 
any such subarrangement $\CC$, allowing us to extend our induction table further back, 
results again in a $3$-arrangement with 
exponents $\exp \CC'' = \{1, 13, 17\}$.
We can continue to remove hyperplanes 
while maintaining the same set of admissible exponents on 
the resulting restrictions until we 
arrive at a subarrangement, $\CB$ say, with $40$ 
hyperplanes.
It turns out that 
we necessarily have to have $\exp \CB = \{1,9,13,17\}$ and 
there are only two such subarrangements $\CB$ 
such that we obtain a valid induction from $\CB$ to all of $\CA$.
Now one can check that every  restriction $\CB''$ of $\CB$ to a hyperplane admits $21$ hyperplanes
(in both remaining instances for $\CB$).
While each of the $3$-arrangements $\CB''$ is still free,
it follows from Theorem \ref{thm:add-del} that
the corresponding subarrangement $\CB'$ is not free,
as $21$ is not realized as a triple sum of
$\exp \CB = \{1,9,13,17\}$.
Consequently, 
as our induction table necessarily does have to pass through one of only two possible 
choices of a free subarrangement $\CB$ with $40$ hyperplanes, and as $\CB'$ is
not free in any case, $\CA$ is not inductively free and we get a contradiction. 
Thus, $\CA$ is not inductively free, as claimed.
\end{proof}

This completes the proof of Theorem \ref{main}.

\begin{remark}
\label{rem:computations}
In order to establish the results of Lemmas \ref{lem:g26} to \ref{lem:g31}, 
we first use the functionality for complex reflection groups 
provided by the   \CHEVIE\ package in   \GAP\ 
(and some \GAP\ code by J.~Michel)
(see \cite{gap3} and \cite{chevie})
in order to obtain explicit 
linear functionals $\alpha$ defining the hyperplanes 
$H = \ker \alpha$ of the reflection arrangement
$\CA(W)$. 
These then allow us to subsequently implement the 
module of derivations $D(\alpha)$ associated with $\alpha$
in the   \Singular\ computer algebra system (cf.~\cite{singular}). 
We then use the module theoretic functionality of
  \Singular\ to show that the
modules of derivations in question are 
free and ultimately are able to show that 
in case of $G_{26}$ and $G_{32}$
the arrangement is inductively free
for a suitable chain of subarrangements obeying 
Definition \ref{def:indfree}.

In Lemma \ref{lem:g31} we use in addition 
the functionality of \Sage\ to compute the intersection lattice of 
$\CA(G_{31})$ 
and then to construct the candidates of an induction table 
for $\CA(G_{31})$, \cite{sage}.
\end{remark}

\subsection{Proof of Theorem \ref{main2}.}
\label{ss:pfmain2}

In view of Corollary \ref{cor:product-heredindfree}, 
Theorem \ref{main2}
follows once we have shown that whenever
$W$ is irreducible and 
$\CA(W)$ is inductively free, that then $\CA(W)$ is  
hereditarily inductively free.
We prove this again by considering the different 
irreducible types of $W$ in turn, \cite{shephardtodd}.

\subsubsection{Cyclic groups}
In case $W$ is a cyclic group, this follows
from Example \ref{ex:1-arr}.

\subsubsection{Coxeter groups}
In \cite[Cor.\ 5.15]{cuntz:indfree}, Barakat and Cuntz showed 
that every crystallographic arrangement is 
hereditarily inductively free.
This covers all cases for $W$ a Weyl group.
In \cite[\S 5.4]{cuntz:indfree}, the authors showed 
that both  $\CA(H_3)$ and $\CA(H_4)$ are 
inductively free. It thus follows from 
Lemmas \ref{lem:3-arr} and \ref{lem:4-arr}
that $\CA(H_3)$ and $\CA(H_4)$ are also
hereditarily inductively free.

\subsubsection{Monomial groups}
It suffices to consider $W = G(r,1,\ell)$ for $r\ge 3$ and $\ell \ge 2$.
Let $\CA = \CA_\ell(r) = \CA(W)$ and let $X \in L(\CA)$.
Thanks to \cite[Prop.\ 6.77]{orlikterao:arrangements},
$\CA^X$ is isomorphic to $\CA_p(r) = \CA(G(r,1,p))$,
where  $p = \dim X$.
Thus, it follows from 
Proposition \ref{prop:gr1l}
that $\CA^X$ is inductively free.
For $r, \ell \ge 3$, the arrangement
$\CA(G(r,r,\ell))$ is not inductively free, by 
Proposition \ref{prop:grrl}.

\subsubsection{Exceptional  groups (non-real)}
Now let $W$ be a non-real, irreducible, exceptional 
complex reflection group.
If $\ell = 2$, then $\CA(W)$ is 
hereditarily inductively free, thanks to 
Lemma \ref{lem:2-arr}.
If $\ell = 3$ and $\CA(W)$ is 
inductively free, then $\CA(W)$ is 
hereditarily inductively free, 
by Lemma \ref{lem:3-arr}. 
If $\ell = 4$ and $\CA(W)$ is 
inductively free, then 
$W$ is transitive on $\CA$, by Theorem \ref{main}, and so,
by Lemma \ref{lem:4-arr}, 
$\CA(W)$ is 
hereditarily inductively free.
If $\ell > 4$, then $\CA(W)$ is 
not inductively free, 
by Theorem \ref{main}.

This completes the proof of Theorem \ref{main2}.




\bigskip

\bibliographystyle{amsalpha}

\newcommand{\etalchar}[1]{$^{#1}$}
\providecommand{\bysame}{\leavevmode\hbox to3em{\hrulefill}\thinspace}
\providecommand{\MR}{\relax\ifhmode\unskip\space\fi MR }
\providecommand{\MRhref}[2]{%
  \href{http://www.ams.org/mathscinet-getitem?mr=#1}{#2} }
\providecommand{\href}[2]{#2}


\end{document}